\newtheorem{theorem}{Theorem}[section]
\newtheorem{proposition}[theorem]{Proposition}
\newtheorem{definition}[theorem]{Definition}
\newtheorem{remark}[theorem]{Remark}
\newtheorem{lemma}[theorem]{Lemma}
\definecolor{brass}{rgb}{0.8, 0.25, 0.33}
\newcommand{\N}{\mathbb{N}} 
\newcommand{\R}{\mathbb{R}} 
\begin{document}
	
	
	\title[Dynamics of the translation semigroup on directed metric trees]{Dynamics of the translation semigroup on directed metric trees}
	
	
	

	\author[Mangino]{Elisabetta Mangino}
	\address{ Dipartimento di Matematica e Fisica “Ennio De Giorgi”, Università del Salento, Via per Arnesano, I-73100 Lecce, Italy}
	\email{elisabetta.mangino@unisalento.it}
	
		\author[Vargas-Moreno]{\'Alvaro Vargas-Moreno}
	\address{Institut Universitari de Matemàtica Pura i Aplicada,\newline\indent Universitat Polit\`{e}cnica de Val\`{e}ncia, \newline\indent 46022, Val\`{e}ncia, Spain.}
	\email{alvarmo1@upv.es}
	
	\keywords{Directed metric tree, Translation semigroup,Weighted $L^p$ spaces, Hypercyclicity, Weak mixing}
	\subjclass[2020]{47A16, 47D06, 05C20}

	\begin{abstract} The dynamics of the left translation semigroup $\{T_t\}_{t \geq 0}$ on weighted $L^p$ spaces over a directed metric tree $L(G)$ is investigated. Necessary and sufficient conditions on the weight family $\rho$ for the strong continuity of the semigroup are provided. Furthermore,  hypercyclicity and weak mixing properties  are characterized in terms of the asymptotic decay of $\rho$ along the tree structure. These results generalize classical $L^p$ translation semigroup dynamics to a graph setting.
	\end{abstract}
	
	\thanks{The first author is member of G.N.A.M.P.A. of the Italian Istituto Nazionale di Alta Matematica (INdAM). 
    This paper has been partially supported by the project MUR PRIN2022 D53D23005580006 “Elliptic and parabolic problems, heat kernel estimates and spectral theory” and by MUR-PRIN 2022 PNRR, Project: Stochastic Modeling of Compound Events, (No. P2022KZJTZ) funded by European Union Next Generation EU.\\
     The second author is supported by MCIN/AEI/10.13039/501100011033/FEDER, UE, Project PID2022-139449NB-I00, which has financed his research stay at Università del Salento. \\
     The authors thank the anonymous reviewers for their insightful comments and technical suggestions, which have significantly improved the readability and clarity of the paper.}
	
	\maketitle
	
	\section*{Introduction}

The study of dynamical systems on metric graphs has its roots in the analysis of the transport equation on so-called quantum graphs. In this context, the effects of the flow along the edges are modeled by differential equations and coupled through specific transmission conditions at the vertices. The pioneering works of Kramar and Sikolya \cite{kramar-sikolya} and Dorn et al. \cite{dorn, dorn-2009, dorn-2010} initiated a systematic analysis of these problems. The monograph by Mugnolo \cite{mugnolo} represents a fundamental reference in this field (see also \cite[Chapter 18]{Batkai-Kramar-Rhandi}). 
A first approach to the study of hypercyclicity in this setting can be found in \cite{proscovia}.

In parallel, the research community turned to the dynamics of operators on graph structures. In particular, the work of Mart\'inez-Avendaño \cite{martinez-avendaño} inaugurated the study of the dynamic properties of the backward shift operator on directed  trees. More recent efforts by Grosse-Erdmann and Papathanasiou \cite{karl-dimitris-hypercyclicity, karl-dimitris-chaos}  have significantly advanced the understanding of hypercyclicity and chaos phenomena in this specific dynamical context. Further related results are due to Menet and Papathanasiou \cite{menet-papathanasiou}, Lopez-Mart\'inez and Papathanasiou \cite{lopez-papathanasiou} and Kawamura \cite{Kawamura}.

Inspired by both the semigroup approach to transport and the rich dynamical analysis of shift operators on trees, this article extends the notion of left translation to the setting of weighted $L^p$ spaces over directed metric trees and studies its dynamics. The first section introduces the structure of metric trees, summarizes the essential definitions of strongly continuous semigroups ($C_0$-semigroups) and hypercyclicity, and establishes the theoretical framework for our work.

The central results of this article are twofold. The first, presented in Section 2, establishes the necessary and sufficient conditions on the weight family $\rho$  that guarantee the translation operator generates a strongly continuous semigroup on the weighted spaces $L^p_\rho$. The second key result, developed in Section 3,  provides  necessary and sufficient conditions on the weight $\rho$ that ensure the semigroup exhibits hypercyclicity on both rooted and unrooted directed metric trees.

\section{Preliminaries}
\subsection{Strongly continuous semigroups}
 We recall some basic definitions and results, referring to the monograph \cite{engel-nagel} for further details.
\begin{definition}\label{def1.1}
			Let $X$ be a Banach space. A one-parameter family $\{T_t\}_{t\geq 0}$ of bounded operators on $X$ is a strongly continuous semigroup  (or a $C_0$-semigroup) if the following conditions are satisfied:
   \begin{enumerate}[(i)]
       \item $T_0=I$
       \item $T_{t+s}=T_t\circ T_s$ for any $s,t\geq 0$
       \item $\lim_{s\rightarrow t}T_s x=T_t x$ for all $x\in X$ and $t\geq 0$.
   \end{enumerate}
   \end{definition}
As a consequence of the strong continuity of the semigroup, there is an exponential bound for the norm of the operators, namely
 there  exist $M\geq 1$ and $w\in\R$  such that 
$$\|T_t\|\leq Me^{wt}, \qquad t\geq 0.$$
We can also express condition $(iii)$ in some equivalent ways. 
\begin{proposition}\label{strongly-condition}\cite[Ch. 1, Proposition 5.3]{engel-nagel} Let $\{T_t\}_{t\geq 0}$ be a family of operators on $X$ satisfying (i) and (ii) of Definition \ref{def1.1}. Then the following assertions are equivalent: 
\begin{enumerate}[(i)]
    \item $\lim_{s\rightarrow t}T_s x=T_t x$ for all $x\in X$ and $t\geq 0$;
    \item $\lim_{t\rightarrow 0}T_t x=x$ for all $x\in X$. 
    \item $\{T_t\}_{t\geq 0}$ is locally equicontinuous and there exists a dense subset $X_0\subset X$ such that $\lim_{t\rightarrow 0}T_t x=x$ for all $x\in X_0$.
\end{enumerate}
\end{proposition}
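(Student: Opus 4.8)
The plan is to establish the cycle of implications (i) $\Rightarrow$ (ii) $\Rightarrow$ (iii) $\Rightarrow$ (i), using the semigroup law $T_{t+s} = T_t \circ T_s$ and the identity $T_0 = I$ from Definition \ref{def1.1} throughout. The implication (i) $\Rightarrow$ (ii) is immediate: specializing the hypothesis of (i) to $t = 0$ and recalling $T_0 = I$ gives precisely $\lim_{s \to 0} T_s x = x$ for all $x \in X$.

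For (ii) $\Rightarrow$ (iii), the dense-subset requirement is met trivially by taking $X_0 = X$, so the real content is local equicontinuity, which for linear operators reduces to local uniform boundedness of the norms $\|T_t\|$. I would first establish boundedness near the origin by a Banach--Steinhaus argument: were $\|T_{t_n}\| \to \infty$ along some sequence $t_n \to 0^+$, then since (ii) forces $T_{t_n} x \to x$, and hence $\sup_n \|T_{t_n} x\| < \infty$ for each fixed $x \in X$, the uniform boundedness principle would give $\sup_n \|T_{t_n}\| < \infty$, a contradiction. Thus there exist $\delta > 0$ and $M \geq 1$ with $\|T_t\| \leq M$ for $t \in [0, \delta]$. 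To propagate the bound to an arbitrary interval $[0, t_0]$, I would write $t = n\delta + r$ with $n \in \N$ and $0 \leq r < \delta$ and use $T_t = T_\delta^{\,n} T_r$ to conclude $\|T_t\| \leq M^{\,n+1} \leq M^{\,t_0/\delta + 1}$.

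For (iii) $\Rightarrow$ (i), I would first upgrade strong continuity at the origin from $X_0$ to all of $X$: given $x \in X$ and $\eps > 0$, choose $y \in X_0$ with $\|x - y\|$ small and split $\|T_t x - x\| \leq \|T_t(x - y)\| + \|T_t y - y\| + \|y - x\|$, bounding the first term by the local estimate $M\|x - y\|$ and sending the middle term to $0$ through the convergence on $X_0$ as $t \to 0^+$. Continuity at a general $t \geq 0$ then separates into a right and a left limit via the identities $T_{t+h} x - T_t x = T_t(T_h x - x)$ and $T_{t-h} x - T_t x = T_{t-h}(x - T_h x)$; in each case the local norm bound absorbs one factor while the strong continuity at $0$ just obtained makes the other tend to $0$.

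I expect the local equicontinuity step within (ii) $\Rightarrow$ (iii) to be the main obstacle, because pointwise convergence of $T_t x$ does not by itself control the operator norms; this is exactly where the completeness of $X$ enters, through the uniform boundedness principle. Once that uniform bound is in hand, the remaining implications are routine consequences of the triangle inequality and the semigroup property.
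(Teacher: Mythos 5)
Your proof is correct. The paper offers no proof of this proposition — it is quoted from Engel--Nagel — and your argument (Banach--Steinhaus to get a uniform bound on $\|T_t\|$ near $t=0$, propagation of that bound via $T_t=T_\delta^{\,n}T_r$, the $\eps/3$ density argument to pass from $X_0$ to $X$, and the splitting of two-sided continuity through $T_{t+h}x-T_tx=T_t(T_hx-x)$ and $T_{t-h}x-T_tx=T_{t-h}(x-T_hx)$) is precisely the standard proof given in that reference, so there is nothing to compare beyond noting the match.
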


\subsection{Linear dynamics.} The main references for linear dynamics are the monographs \cite{Alfred, bayart}.
Throughout this section, let $X$ be a separable Banach space. 
An operator $T$ on $X$ is said to be \emph{hypercyclic} if there is some $x\in X$ whose orbit under $T$, 
$$\text{Orb}(x, T):=\{x, Tx, T^2x, ...\},$$
is dense in $X$.  The operator $T$ is said to be \emph{topologically transitive} if for every pair $U, V$ of nonempty open sets there exists some $n\in \N$ such that $T^n(U)\cap V\neq  \emptyset$. Furthermore, if $T^k(U)\cap V\neq  \emptyset$ for all $k\geq n$, then the operator is said to be \emph{mixing}. For separable Banach spaces, the Birkhoff Transitivity Theorem \cite[Theorem 2.19]{Alfred} states that hypercyclicity is equivalent to topological transitivity.
Moreover, if the operator defined by 
$$(T\oplus T)(x, y)=(Tx, Ty),\qquad  (x, y)\in X\times X$$
is hypercyclic, then $T$ is called \emph{weakly mixing}. Every weakly mixing operator is hypercyclic, but the converse fails. We refer to \cite{Alfred} for further insights about the equivalence of the weakly mixing property and the Hypercyclicity Criterion.

All these dynamical properties have a counterpart in the context of strongly continuous semigroups. A strongly continuous semigroup $\{T_t\}_{t\geq 0}$ on   $X$ is said to be \emph{hypercyclic} if there exists some $x\in X$ whose orbit, defined as 
$$\text{Orb}(x, \{T_t\}):=\{T_tx\,:\, 
t\geq 0\},$$
is dense in $X$. Furthermore, the semigroup $\{T_t\}_{t\geq 0}$ is said to be \emph{topologically transitive} if for any pair of nonempty open sets $U, V$ there is some $t_0\geq 0$ such that $T_{t_0}(U)\cap V\neq  \emptyset$, while, if $T_t(U)\cap V\neq  \emptyset$ for any $t\geq t_0$ then the semigroup is said to be \emph{mixing}.  

Topological transitivity is  equivalent to hypercyclicity also in the context of strongly continuous semigroups \cite[Ch. 7]{Alfred}. 

The semigroup $\{T_t\}_{t\geq 0}$ is called \emph{weakly mixing} if $(T_t\oplus T_t)_{t\geq 0}$ is \emph{topologically transitive} in $X\times X$. Every weakly mixing semigroup is hypercyclic, but  to the best of the authors' knowledge, whether the converse holds true remains an open problem. 
A discretization of a semigroup $\{T_t\}_{t\geq 0}$ is a sequence of operators $(T_{t_n})_n$ with $t_n\rightarrow\infty$. A discretization is said to be autonomous if there is some $t_0$ such that $t_n=nt_0$ for every $n\in\N$.
The following theorem provides an equivalence between the weak mixing property and the existence of a mixing discretization.  
\begin{theorem}\label{weaklymixing-mixingdiscretization}\cite[Propositions 7.20, 7.25]{Alfred}
    Let $\{T_t\}_{t\geq 0}$ be a strongly continuous semigroup on $X$. Then the following assertions are equivalent. 
    \begin{enumerate}[(i)]
        \item $\{T_t\}_{t\geq 0}$ is weakly mixing;
        \item some discretization of $\{T_t\}_{t\geq 0}$ is mixing;
        \item some discretization of $\{T_t\}_{t\geq 0}$ is weakly mixing.
         \item some autonomous discretization of $\{T_t\}_{t\geq 0}$ is weakly mixing;
        \item every autonomous discretization of $\{T_t\}_{t\geq 0}$ is weakly mixing. 
    \end{enumerate}
\end{theorem}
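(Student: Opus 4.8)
The plan is to prove all five assertions equivalent by a single cycle of implications,
$$(i)\Rightarrow(v)\Rightarrow(iv)\Rightarrow(ii)\Rightarrow(iii)\Rightarrow(i),$$
in which three arrows are essentially formal. For $(iii)\Rightarrow(i)$ note that the times of any discretization lie in $[0,\infty)$, so if $(T_{t_n}\oplus T_{t_n})_n$ is topologically transitive, then each witnessing index $n$ supplies a time $t_n\ge 0$ showing that $(T_t\oplus T_t)_{t\ge0}$ is topologically transitive, i.e. $\{T_t\}_{t\ge0}$ is weakly mixing. For $(ii)\Rightarrow(iii)$ I would check that a mixing sequence $(W_n)_n$ is weakly mixing: given nonempty open $U_1,U_2,V_1,V_2$, mixing yields thresholds $N_1,N_2$ with $W_n(U_i)\cap V_i\neq\emptyset$ for $n\ge N_i$, and any $n\ge\max(N_1,N_2)$ makes $W_n\oplus W_n$ transitive at that pair. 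Finally $(v)\Rightarrow(iv)$ is immediate once one observes that autonomous discretizations exist (take $t_n=n t_0$ for any $t_0>0$).

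The heart of the matter is $(i)\Rightarrow(v)$, which I would deduce from the Conejero--M\"uller--Peris theorem. The point is that the product family $\{T_t\oplus T_t\}_{t\ge0}$ is itself a $C_0$-semigroup on the separable space $X\times X$, and by definition weak mixing of $\{T_t\}_{t\ge0}$ means precisely that this product semigroup is topologically transitive, hence hypercyclic by the Birkhoff Transitivity Theorem. The Conejero--M\"uller--Peris theorem asserts that every single operator at a time $t_0>0$ of a hypercyclic $C_0$-semigroup is hypercyclic; applied to $\{T_t\oplus T_t\}_{t\ge0}$ it yields that $T_{t_0}\oplus T_{t_0}$ is hypercyclic for every $t_0>0$, which is exactly the statement that $T_{t_0}$ is weakly mixing, i.e. that the autonomous discretization $(T_{t_0}^{\,n})_n$ is weakly mixing. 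Since $t_0>0$ is arbitrary, $(v)$ follows.

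It remains to close the cycle with $(iv)\Rightarrow(ii)$. Here $(iv)$ says that $T_{t_0}$ is weakly mixing for some $t_0>0$, so by the equivalence between the weak mixing property and the Hypercyclicity Criterion there are dense subsets $X_0,Y_0\subseteq X$, an increasing sequence $(n_k)_k$, and maps $S_k\colon Y_0\to X$ with $T_{t_0}^{\,n_k}u\to0$ for $u\in X_0$, and $S_k v\to0$, $T_{t_0}^{\,n_k}S_k v\to v$ for $v\in Y_0$. Setting $s_k:=n_k t_0\to\infty$ produces a discretization $(T_{s_k})_k$ which I claim is mixing: given nonempty open $U,V$, pick $x\in X_0\cap U$ and $y\in Y_0\cap V$ and put $z_k:=x+S_k y$; then $z_k\to x\in U$ and $T_{s_k}z_k=T_{t_0}^{\,n_k}x+T_{t_0}^{\,n_k}S_k y\to y\in V$, so $T_{s_k}(U)\cap V\neq\emptyset$ for all large $k$. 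This gives $(ii)$ and closes the cycle. The only nonelementary ingredient, and the main obstacle, is the Conejero--M\"uller--Peris step used in $(i)\Rightarrow(v)$: it is not a formal transfer, since the semigroup is not invertible and the ``phases'' by which the witnessing semigroup times differ from multiples of $t_0$ cannot be undone within $[0,\infty)$; its proof rests on a Baire-category and connectedness argument in the phase variable exploiting the continuity of $t\mapsto T_t x$, and I would either invoke it as the cited result or reproduce that argument for the product semigroup.
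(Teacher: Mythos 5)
The paper does not prove this statement at all: it is quoted verbatim from \cite[Propositions 7.20, 7.25]{Alfred}, so there is no internal argument to compare against. Your reconstruction is correct and is essentially the canonical proof from that reference: the three formal arrows $(v)\Rightarrow(iv)$, $(ii)\Rightarrow(iii)$, $(iii)\Rightarrow(i)$ are handled properly (in particular you correctly use that product open sets form a base of $X\times X$, and that any discretization time $t_n\geq 0$ witnesses transitivity of the semigroup); the step $(i)\Rightarrow(v)$ via the Conejero--M\"uller--Peris theorem applied to the product $C_0$-semigroup $\{T_t\oplus T_t\}_{t\geq 0}$ on the separable space $X\times X$, combined with the Birkhoff transitivity theorem for semigroups, is exactly the standard route; and $(iv)\Rightarrow(ii)$ via the B\`es--Peris equivalence of weak mixing with the Hypercyclicity Criterion, followed by the usual $z_k=x+S_k y$ construction showing that $(T^{n_k}_{t_0})_k$ is a mixing sequence, is also the standard argument. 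You are right to flag that the whole weight of the theorem rests on the two nonelementary inputs (Conejero--M\"uller--Peris and B\`es--Peris); since the paper itself treats the entire statement as a citation, invoking those two results as cited black boxes is entirely consistent with the paper's level of rigor.
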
  
Finally, we recall the following important result due to Conejero, M\'uller and Peris.

\begin{theorem}\label{hypercyclic-autonomous}\cite[Theorem 6.8]{Alfred}
    Let $\{T_t\}_{t\geq 0}$ be a strongly continuous semigroup on $X$. Then the following assertions are equivalent. 
    \begin{enumerate}[(i)]
        \item $\{T_t\}_{t\geq 0}$ is hypercyclic;
        \item every operator $T_t$ is hypercyclic for $t> 0$
        \item there exists $t_0>0$ such that the operator $T_{t_0}$ is hypercyclic.
    \end{enumerate}
\end{theorem}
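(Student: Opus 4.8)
The plan is to prove the chain of implications $(ii)\Rightarrow(iii)\Rightarrow(i)\Rightarrow(ii)$, isolating the single genuinely hard step. The implication $(ii)\Rightarrow(iii)$ is immediate: one simply picks any $t_0>0$. For $(iii)\Rightarrow(i)$, suppose $T_{t_0}$ is hypercyclic with hypercyclic vector $x$. Then its orbit satisfies
$$\operatorname{Orb}(x,T_{t_0})=\{T_{t_0}^{\,n}x : n\ge0\}=\{T_{nt_0}x : n\ge0\}\subseteq \operatorname{Orb}(x,\{T_t\}),$$
and since the left-hand set is dense in $X$, so is the semigroup orbit; hence $\{T_t\}_{t\ge0}$ is hypercyclic with the same vector $x$. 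Thus all the substance lies in $(i)\Rightarrow(ii)$.

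For $(i)\Rightarrow(ii)$ I would fix a hypercyclic vector $x$ of the semigroup and an arbitrary $t_0>0$, and show that in fact $x$ is already hypercyclic for $T_{t_0}$ (so that, by the Birkhoff Transitivity Theorem, $T_{t_0}$ is hypercyclic). First I would record that the map $(t,y)\mapsto T_t y$ is jointly continuous on $[0,\infty)\times X$: this follows from strong continuity together with the exponential bound $\|T_t\|\le Me^{wt}$, which makes $\|T_t\|$ uniformly bounded on compact time intervals. Consequently the arc $K=\{T_r x : r\in[0,t_0]\}$ is compact and connected, and the full orbit decomposes as the chain
$$\operatorname{Orb}(x,\{T_t\})=\bigcup_{n\ge0}T_{t_0}^{\,n}K,$$
whose consecutive pieces $T_{t_0}^{\,n}K$ and $T_{t_0}^{\,n+1}K$ share the point $T_{(n+1)t_0}x$; in particular the dense orbit is connected.

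The hard part will be upgrading the density of the \emph{continuous} orbit to density of the \emph{discrete} orbit $\{T_{nt_0}x : n\ge0\}$. I would argue by contradiction: if $Y:=\overline{\{T_{nt_0}x : n\ge0\}}\neq X$, then $Y$ is a proper closed, $T_{t_0}$-invariant subset, while the decomposition above forces
$$X=\overline{\operatorname{Orb}(x,\{T_t\})}=\overline{\bigcup_{r\in[0,t_0]}T_r(Y)}.$$
The core of the argument, and the main obstacle, is a connectedness plus Baire category lemma (the heart of the Conejero--M\"uller--Peris theorem) showing that a proper closed invariant set $Y$ cannot satisfy this density identity for a strongly continuous, hence ``time-connected'', flow: roughly, one partitions $[0,t_0]$ and uses the joint continuity and the connectedness of $K$ to transport density back into $Y$ itself, contradicting $Y\neq X$. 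Once the discrete orbit is shown to be dense, $x$ witnesses hypercyclicity of $T_{t_0}$; as $t_0>0$ was arbitrary, $(ii)$ follows. I expect the connectedness lemma to require the most care, since it is exactly the place where continuity of the time parameter---absent for a single abstract operator---is converted into information about the integer powers of $T_{t_0}$.
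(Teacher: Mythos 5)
The paper does not prove this theorem: it is quoted verbatim as the Conejero--M\"uller--Peris theorem, \cite[Theorem 6.8]{Alfred}, so there is no in-paper argument to compare against. Judged on its own terms, your proposal is fine on the two easy implications --- $(ii)\Rightarrow(iii)$ is trivial, and your observation that $\operatorname{Orb}(x,T_{t_0})\subseteq\operatorname{Orb}(x,\{T_t\})$ settles $(iii)\Rightarrow(i)$ --- and your preliminary reductions for $(i)\Rightarrow(ii)$ (joint continuity of $(t,y)\mapsto T_ty$ from strong continuity plus the exponential bound, compactness and connectedness of the arc $K=\{T_rx: r\in[0,t_0]\}$, the decomposition of the orbit into the chain $\bigcup_n T_{t_0}^nK$) are all correct.

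However, the proof stops exactly where the theorem begins. The ``connectedness plus Baire category lemma'' you invoke is not a lemma you prove or even state precisely; the sentence ``a proper closed invariant set $Y$ cannot satisfy this density identity'' is essentially a restatement of $(i)\Rightarrow(ii)$, not an argument for it. Concretely: from $Y:=\overline{\{T_{nt_0}x\}}\neq X$ and $X=\overline{\bigcup_{r\in[0,t_0]}T_r(Y)}$ you cannot extract a contradiction by Baire category in any direct way --- the sets $T_r(Y)$ are generally not closed, the union is over an uncountable index set, and connectedness of the orbit by itself is compatible with the discrete orbit being non-dense. The known proofs require a genuinely different ingredient: in the Grosse-Erdmann--Peris presentation one associates to each $y\neq 0$ the set $F_y=\{s\in[0,t_0]:\ y\in\overline{\{T_{nt_0+s}x:n\ge 0\}}\}$, shows each $F_y$ is nonempty and closed, and then applies Sierpi\'nski's theorem (a continuum cannot be partitioned into countably many, but at least two, disjoint nonempty closed sets) to a suitable partition of the parameter interval to force $0\in F_y$ for every $y$; the original Conejero--M\"uller--Peris argument is a comparably delicate analysis of these level sets. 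None of this is present in, or readily recoverable from, your sketch, so the central implication remains unproved.
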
  

\subsection{Directed metric trees}

In  this section, we  establish the fundamental structure for our investigation, namely the \emph{directed metric tree}. This construction starts with a directed tree, which is then endowed with a continuous metric structure.

We recall that a   \emph{directed tree} is a connected graph $G=(V, E)$ without cycles. That is:
\begin{enumerate}[(i)]
 \item $V$ is a countable set of vertices.
 \item $E\subset (V\times V) \setminus \{(v,v): v\in V\}$ is the set of directed edges (or arcs).
 \item For every $u,v\in V$, $u\neq  v$ there exists a finite sequence of vertices $v_1, \dots, v_k\in V $ such that $(u, v_1)$, $(v_1,v_2)$, $(v_2, v_3)$, \dots, $(v_{k-1}, v_k)$, $(v_k, v) \in E$.
 \item  There are no closed sequences of vertices $v_1, \dots, v_k \in V$ such that $(v_1,v_2)$, $(v_2, v_3)$, \dots, $(v_{k-1}, v_k)$, $(v_k, v_1) \in E$.
 \item For each vertex $v \in V$, there is at most one vertex $w\in V$ such that $(w, v)\in E$. In this case, $w$ is the \emph{parent} of $v$ and $v$ is a \emph{child} of $w$.
\end{enumerate}
A vertex without children is called a \emph{leaf}.

We obtain the \emph{directed metric tree} $L(G)$ by assigning a coordinate $x_e\in [0,1)$ to every edge $e\in E$, increasing in the direction of the edge. Thus, we identify each edge $e$ with the interval $[0,1)$. For an edge $e=(v,w)$, $v$ is the \emph{tail} and $w$ is the \emph{head}.

A directed metric tree is \emph{rooted} if there exists a vertex, called the \emph{root}, that is not the head of any edge (i.e., no edge enters the root). If no such vertex exists, the tree is said to be \emph{unrooted}.

The geometric structure of $L(G)$ can be described  by the edge-to-edge incidence matrix $\mathcal{A}$. Indexing the set of edges as $E=\{e_i\}_{i\in I}$, the entries of $\mathcal{A}$ are defined by:
$$
\mathcal{A}_{ij}=
\begin{cases}
1 & \text{ if the head of } e_i \text{ is the tail of } e_j, \\
 0 & \text{ otherwise.}
\end{cases}
$$

Since the tree structure dictates that each vertex has at most one parent, it holds that 
$$
\forall j \in I:\quad \text{card}(\{i\in I\,: \mathcal A_{ij}\neq 0\})\leq 1.
$$
That is, each column of $\mathcal{A}$ contains at most one non-zero term.

The powers of $\mathcal{A}$ track paths along the tree: $(\mathcal A^n)_{ij}\neq 0$ if and only if there exists a directed path of length $n$ starting with edge $e_i$ and ending with edge $e_j$. We denote the set of indexes of edges reachable from $e_i$ in $n \in\mathds N$ steps as:
$$M_n(i)=\{ j \in I\, :\, (\mathcal A^n)_{ij}\neq 0\},$$
while we set $M_0(i):=\{i\}$.

Observe that, due to the geometry of the tree, for any $n\in\N$, 
\begin{equation}\label{int}M_n(i)\cap M_n(j)=\emptyset \qquad \text{ if } i\neq  j.\end{equation}

We now define the functional space on $L(G)$ on which the dynamics will be studied. A function $f$ on $L(G)$ is identified with the family of its restrictions to the edges, $(f_i)_{i\in I}$, where $f_i = f|_{e_i}: [0,1) \to \mathbb{K}$ (with $\mathbb{K}=\mathbb{R}$ or $\mathbb{C}$).

A weight on $L(G)$ is a family $\rho=(\rho_{i})_{i\in I}$, where $\rho_i:[0,1)\rightarrow \R$,  $\rho_i\in L^1((0,1))$  and is  a.e. strictly positive.  

For a given exponent $p \in [1, \infty)$, we define the weighted space on a single edge $e_i$ as:
$$L^p_{\rho_i}[0,1):=\{ f:[0,1)\rightarrow \R\,:\, f \mbox{ is measurable and } |f|^p\rho_i\in L^1((0,1))\},$$
endowed with the norm $||f||_{p,\rho_i}=\Big(\int_0^1|f|^p\rho_i dx\Big)^{\frac 1 p}.$

Naturally, the value at $0$ of a function $f\in L^p_{\rho_i}[0,1)$ carries no measure-theoretic information, but we prefer to include $0$ in the interval where  these spaces are defined to better highlight the geometric framework.

The space of functions on the entire metric tree, $L^p_{\rho}(L(G))$, is then defined as:
$$
L^p_{\rho}(L(G)) := \left\{(f_i)_{i\in I }\,:\, f_i\in L^p_{\rho_i}[0,1) \text{ and } \sum_{i\in I }||f_i||_{p,\rho_i}^p<\infty\right\}.
$$
This space is a Banach space when equipped with the norm:
$$\|f\|_{p,\rho}:=\left(\sum_{i\in I }||f_i||_{p,\rho_i}^p\right)^{\frac 1 p},\qquad  f=(f_i)_{i\in I } \in L^p_\rho(L(G)).$$

It will be useful to observe that  condition \eqref{int} yields  for any $f\in L^p_\rho(L(G))$ and for any $n\in \N$ 
    \begin{equation}\label{disnorm} ||f||^p_{p,\rho}=\sum_{i\in I }||f_i||_{p,\rho_i}^p \geq \sum_{i\in I}\sum_{j\in M_n(i)}||f_j||_{p,\rho_j}^p.\end{equation}

A straightforward proof gives that 
  \begin{align*}&F^p_\rho:=\oplus_{i\in I } L^p_{\rho_i}[0,1)=\\
  &=\left\{(f_i)_{i\in I }\,:\,  f_i\in L^p_{\rho_i}[0,1) \text{ and there exists a finite set } F\subseteq I  \text{ s.t. }  f_i=0 \text{ if } i \in I \setminus F\right\}\end{align*}
  is dense in $L^p_\rho(L(G))$, and, consequently, $\oplus_{i\in I } C_c(0,1)$  is dense in $L^p_{\rho}(L(G))$.

\subsection{Left translation semigroups on directed metric trees}

We now define the left translation semigroup $\{T_t\}_{t \geq 0}$ on $L^p_{\rho}(L(G))$. This family of operators describes the flow of a function's value along the directed paths of the tree. For any $s\in [0,1)$ and $t\geq 0$, let $n(t,s)$ be the integer part of $s+t$, i.e., the unique $n\in \N_0$ such that $n\leq s+t<n+1$.
Keeping in mind that $s\in [0,1)$, if $n_0\leq t<n_0+1$, then 
\begin{equation}\label{n}n(t,s)=\begin{cases} n_0 &\mbox{ if } 0\leq s<n_0+1-t\\
n_0+1 &\mbox{ if } n_0+1-t\leq s <1 1\end{cases}.\end{equation}

 For $f\in L^p_{\rho}(L(G))$ and $t\geq 0$ we define the left translation $T_t$ as 
$$(T_t f)(s):=\mathcal{A}^{n(t,s)} f(t+s-n(t,s)), \qquad s\in[0, 1).$$
More in detail, for $f=(f_i)_{i\in I} \in L^p_{\rho}(L(G))$ and $t \geq 0$,
the value of the translated function is given by:
$$
(T_t f)_i(s) = \sum_{j \in M_{n(t,s)}(i)} (\mathcal{A}^{n(t,s)})_{ij} \cdot f_j(t+s-n(t,s)), \qquad i\in I,\ s \in [0,1).
$$

The map defined in this way can intuitively be seen as a left translation with several contributions that are  described by the matrix $\mathcal{A}$. 

Note that if $t=n\in \mathds N$, then $n(t,s)=n$ for every $s\in [0,1)$ and therefore

\begin{equation*}
(T_n f)_i(s) = \sum_{j \in M_n(i)} (\mathcal{A}^{n})_{ij} \cdot f_j(s), \qquad s \in [0,1).
\end{equation*}

\begin{remark}\label{Ltr} If the tree is  a rooted sequence of connected edges of the following type:

\begin{center}\includegraphics[width=5cm]{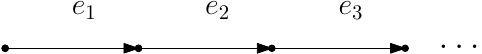}\end{center}
it holds that 
$$(\mathcal A)^{n}_{ij}\neq 0 \Leftrightarrow j=i+n,$$
hence for every $(f_i)_{i\in \N}\in L^p(L(G))$ and every  $i\in\N$
$$(T_tf)_i=f_{i+n(t,s)}(t+s-n(t,s)), \qquad s\in [0,1), t\geq 0.$$
Clearly $L^p_\rho(L(G))$ is isometric to $L^p_{\widetilde\rho}([0,+\infty))$,  
with 
\begin{equation}\label{wrho}\widetilde\rho(s)=\rho_i(s-i),\qquad    s\in [i,i+1),\  i\in\N,\end{equation} 
via the isometry
$$\Phi:(f_i)_{i\in I}\longmapsto f$$ 
where $f(u)=f_{[u]}(u-[u])$ for any $u\in [0,+\infty)$.

Under this isometry, the left translation semigroup on the directed metric tree  turns into the classical left translation  semigroup $(\widetilde T_t)_{t\geq 0}$ on $L^p_{\widetilde \rho}[0,+\infty)$. Indeed,
for every $f\in L^p_{\rho}(L(G))$ and $u\in [0,+\infty)$
\begin{align*}(\Phi( T_t((f_i)_{i\in\N}))(u)&= [T_t((f_i)_{i\in\N})]_{[u]}(u-[u])\\&=f_{[u]+n(t, u-[u])}(t+u-[u]-n(t,u-[u]))=\Phi((f_i)_{i\in\N})(t+u) \\&=\widetilde T_t (\Phi((f_i)_{i\in\N})(u).\end{align*}

Analogously, if the tree is an unrooted sequence of connected edges, then $L^p_\rho(L(G))$ is isometric to $L^p_{\widetilde\rho}(\R)$, where $\widetilde\rho$ is defined on $\R$ as in \eqref{wrho}, but with $i\in \mathds Z$. In this case,   the left translation semigroup on the directed tree  turns into the left translation semigroup on $L^p_{\widetilde\rho}(\R)$.
\end{remark}

\section{Strong continuity for the translation semigroup on directed metric trees}

\hspace{1em}In this section we provide a characterization of the weight $\rho$  such that the translation semigroup $\{T_t\}_{t\geq 0 }$ is  strongly continuous on the weighted space $L^p_\rho(L(G))$.


We begin with the following preparatory result that  can be proved by adapting the proof of  \cite[Lemma 4.3]{karl-dimitris-hypercyclicity}.

\begin{lemma}(see \cite[Lemma 4.3]{karl-dimitris-hypercyclicity})\label{lema sucesiones}
    Let $E\subseteq \R^d$ be a Lebesgue measurable set, $I$ a finite or countable set,  $\rho=(\rho_i)_{i\in I}$  a family  of positive measurable functions on $E$, and define the set
    $$\mathcal{V}=\left\{(v_i)_{i\in I}\,:\, v_i \text{ measurable on } E \text{ for every } i\in I, \ \ \sum_{i\in I}|v_i(s)|=1 \text{ for all }s\in E\right\}.$$
    Then, for  every $s\in E$
     \begin{align*}
        &
        &\inf_{(v_i)_i\in \mathcal{V}}\left(\sum_{i\in I}|v_i(s)|^p\rho_i(s)\right)=\left(\sum_{i\in I}\frac{1}{\rho_i(s)^{1/(p-1)}}\right)^{1-p}, \qquad 1<p<\infty.
\end{align*}

\end{lemma}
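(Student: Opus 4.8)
The plan is to reduce the statement to a pointwise convex optimization and to solve it by Hölder's inequality. \emph{Fix $s\in E$}. Since $\rho_i(s)>0$, I first observe that the infimum over $\mathcal{V}$ evaluated at $s$ coincides with
\[
\inf\Bigl\{\sum_{i\in I}a_i^p\,\rho_i(s)\ :\ a_i\geq 0,\ \sum_{i\in I}a_i=1\Bigr\}.
\]
Indeed, every $(v_i)_{i}\in\mathcal{V}$ produces at $s$ a feasible sequence $a_i=|v_i(s)|$, and conversely any feasible sequence $(a_i)_i$ is realized by the constant family $v_i\equiv a_i$, which belongs to $\mathcal{V}$. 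This observation removes all measurability concerns and turns the problem into a purely scalar one for each fixed $s$.

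For the \emph{lower bound}, write $q=p/(p-1)$ and split
\[
1=\sum_{i\in I}a_i=\sum_{i\in I}\bigl(a_i\,\rho_i(s)^{1/p}\bigr)\,\rho_i(s)^{-1/p}.
\]
Applying Hölder's inequality with exponents $p$ and $q$, and using that $\tfrac{q}{p}=\tfrac{1}{p-1}$, I get
\[
1\leq\Bigl(\sum_{i\in I}a_i^p\,\rho_i(s)\Bigr)^{1/p}\Bigl(\sum_{i\in I}\rho_i(s)^{-1/(p-1)}\Bigr)^{1/q}.
\]
Raising to the power $p$ and rearranging yields $\sum_i a_i^p\rho_i(s)\geq\bigl(\sum_i\rho_i(s)^{-1/(p-1)}\bigr)^{1-p}$, which is exactly the claimed value; hence the right-hand side is a lower bound for the infimum (in the divergent case this simply reads $\geq 0$).

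It remains to show the bound is \emph{sharp}. When $S(s):=\sum_{i}\rho_i(s)^{-1/(p-1)}<\infty$ (always the case if $I$ is finite), I take the equality case of Hölder, namely $a_i=\rho_i(s)^{-1/(p-1)}/S(s)$; a direct computation using $-\tfrac{p}{p-1}+1=-\tfrac{1}{p-1}$ gives $\sum_i a_i^p\rho_i(s)=S(s)^{1-p}$, so the infimum is attained. When $S(s)=+\infty$ the target value $S(s)^{1-p}$ equals $0$ because $1-p<0$; here I argue by truncation, choosing finite sets $F_N\uparrow I$ with partial sums $S_N(s)\to\infty$ and testing $a_i=\rho_i(s)^{-1/(p-1)}/S_N(s)$ for $i\in F_N$ and $a_i=0$ otherwise, which gives $\sum_i a_i^p\rho_i(s)=S_N(s)^{1-p}\to 0$.

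The \emph{main obstacle} is precisely this countably infinite regime: one must ensure that Hölder's inequality is legitimately applied when $\sum_i\rho_i(s)^{-1/(p-1)}$ may diverge, and that the infimum genuinely drops to $0$ rather than to a positive value. The truncation argument settles this. I would also note that the excluded endpoints $p=1$ and $p=\infty$ are exactly where the exponent $1/(p-1)$ degenerates, consistent with the hypothesis $1<p<\infty$.
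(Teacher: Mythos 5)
Your proof is correct, and it is essentially the standard duality argument (Hölder's inequality for the lower bound, the extremal weights $a_i=\rho_i(s)^{-1/(p-1)}/S(s)$ for sharpness, truncation when $S(s)=\infty$) that underlies the cited Lemma 4.3 of Grosse-Erdmann and Papathanasiou; the paper itself omits the proof and simply refers to that source. Your preliminary reduction to a pointwise scalar minimization via constant families is a clean way to dispose of the measurability issue and is consistent with how the result is used in the paper.
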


\begin{proposition}
    Let $\rho=(\rho_i)_{i\in I}$ be a weight on the directed metric graph $L(G)$ and let $M\geq 1$ and $w\in\mathds R$.
    \begin{enumerate}[(a)]
        \item The following assertions are equivalent:
        \begin{enumerate}[(i)]
            \item The family $\{T_t\}_{t\geq 0}$ is a strongly continuous semigroup on $L^1_{\rho}(L(G))$ such that $||T_t||\leq Me^{wt}$ for all $t\ge 0$;
            \item for all $i\in I $, $t\geq 0$, and  a.e. $s\in[0, 1)$ 
          $$\rho_i(s)\leq Me^{wt}\inf_{j\in M_{n(t,s)}(i)}\rho_j(s+t-n(t,s)).$$
          
        \end{enumerate}
        \item Let $1<p<\infty$.
        Then the following assertions are equivalent: 
        \begin{enumerate}[(i)]
           \item The family $\{T_t\}_{t\geq 0}$ is a strongly continuous semigroup on $L^p_{\rho}(L(G))$ such that $||T_t||\leq Me^{wt}$ for all $t\geq 0$;
            \item for all $i\in I $, $t\geq 0$,  and a.e. $s\in[0, 1)$
            $$\left(\sum_{j\in M_{n(t,s)}(i)}\frac{1}{\rho_j(s+t-n(t,s))^{1/(p-1)}}\right)^{p-1}\leq M^pe^{pwt}\frac{1}{\rho_i(s)}.$$
            
        \end{enumerate}
        \end{enumerate}
\end{proposition}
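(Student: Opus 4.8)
The plan is to reduce both equivalences to a single mechanism: conditions (a)(ii) and (b)(ii) encode \emph{exactly} the operator estimate $\|T_t\|\le Me^{wt}$ on $L^p_\rho(L(G))$ (for $p=1$ and for $1<p<\infty$ respectively), and once this estimate holds for every $t\ge 0$, strong continuity comes for free. So I would first prove the equivalence between the pointwise weight conditions and the norm bound, and then dispatch strong continuity separately. Since assertion (i) trivially contains the norm bound, the direction (i)$\Rightarrow$(ii) is immediate from the boundedness half; the real content is (ii)$\Rightarrow$(i).

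First I would compute, for fixed $t\ge 0$ with $n_0=\lfloor t\rfloor$ and $\theta=t-n_0$,
$$\|T_t f\|_{p,\rho}^p=\sum_{i\in I}\int_0^1\Big|\sum_{j\in M_{n(t,s)}(i)} f_j\big(s+t-n(t,s)\big)\Big|^p\rho_i(s)\,ds,$$
where the coefficients $(\mathcal A^{n})_{ij}$ have been dropped because on a tree directed paths are unique, so $(\mathcal A^{n})_{ij}\in\{0,1\}$ with the ones indexed precisely by $M_n(i)$. Splitting $[0,1]$ according to the two values $n(t,s)\in\{n_0,n_0+1\}$ recorded in \eqref{n} and substituting $u=s+t-n(t,s)$ turns this into two integrals over the disjoint ranges $u\in[\theta,1)$ (where $n(t,s)=n_0$ and $s=u-\theta$) and $u\in[0,\theta)$ (where $n(t,s)=n_0+1$ and $s=u-\theta+1$). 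Since the restrictions of the $f_j$ to these two ranges of $u$ may be prescribed independently, the estimate $\|T_tf\|_{p,\rho}^p\le M^pe^{pwt}\|f\|_{p,\rho}^p$ holds for all $f$ if and only if each of the two pieces does; and as the values $f_j(u)$ may moreover be chosen arbitrarily pointwise, each integral inequality is equivalent to its pointwise-in-$u$ counterpart.

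The pointwise inequality then decouples over $i$: by the disjointness \eqref{int}, for fixed $n$ each index $j$ lies in at most one $M_n(i)$, so everything reduces, for each $i$, to the sharp constant in
$$\Big|\sum_{j\in M_n(i)} f_j\Big|^p\rho_i(s)\le C^p\sum_{j\in M_n(i)}|f_j|^p\rho_j(u).$$
By homogeneity and positivity one normalizes $\sum_j f_j=1$ with $f_j\ge 0$, so the sharp $C^p$ is governed by minimizing $\sum_{j\in M_n(i)}|f_j|^p\rho_j(u)$ subject to $\sum_j f_j=1$. For $1<p<\infty$ this minimum is exactly $\big(\sum_{j\in M_n(i)}\rho_j(u)^{-1/(p-1)}\big)^{1-p}$ by Lemma~\ref{lema sucesiones} (with explicit extremizer $f_j\propto\rho_j(u)^{-1/(p-1)}$, which is what makes the localization step above legitimate), yielding
$$\rho_i(s)\Big(\sum_{j\in M_n(i)}\rho_j(u)^{-1/(p-1)}\Big)^{p-1}\le M^pe^{pwt},$$
i.e. precisely (b)(ii) after recalling $u=s+t-n(t,s)$. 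For $p=1$ the same scheme applies with the elementary fact $\sup\{\sum_j f_j:\ \sum_j f_j\rho_j(u)=1,\ f_j\ge 0\}=1/\inf_{j\in M_n(i)}\rho_j(u)$, which gives $\rho_i(s)\le Me^{wt}\inf_{j\in M_n(i)}\rho_j(u)$, that is (a)(ii).

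It remains to promote the norm bound to strong continuity. The identities $T_0=I$ and $T_{t+s}=T_tT_s$ hold directly from the definition (the matrix powers compose and $n(\cdot,\cdot)$ adds correctly), and $\|T_t\|\le Me^{wt}$ gives local equicontinuity on each $[0,a]$. On the dense subspace $\bigoplus_{i\in I}C[0,1]$ one checks $\lim_{t\to0^+}T_tf=f$ directly: for small $t$ the translation reduces to a genuine shift $f_i(s+t)$ on $[0,1-t)$ plus a ``jump'' $\sum_{j\in M_1(i)}f_j(s+t-1)$ on $[1-t,1]$, and the latter contributes nothing in the limit because the offending set has measure $t\to0$ while, $f$ being finitely supported, only finitely many bounded terms appear. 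Proposition~\ref{strongly-condition}(iii) then yields strong continuity on all of $L^p_\rho(L(G))$, completing (ii)$\Rightarrow$(i). The main obstacle is the first reduction: disentangling the substitution $u=s+t-n(t,s)$ so that the two level-pieces become genuinely independent, and rigorously passing from the operator-norm (integral) inequality to the pointwise scalar inequality. It is precisely this measurability/localization step, cleaned up here by the explicit form of the extremizer, that must be in place before Lemma~\ref{lema sucesiones} can be invoked to pin down the sharp constant.
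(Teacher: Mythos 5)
Your proposal is correct and follows essentially the same route as the paper: the splitting of $[0,1]$ according to $n(t,s)\in\{n_0,n_0+1\}$, the use of H\"older's inequality together with the disjointness \eqref{int}/\eqref{disnorm} for sufficiency, the localization to a positive-measure bad set with the (near-)extremizers of Lemma~\ref{lema sucesiones} for necessity, and the dense-subspace argument on $\bigoplus_{i\in I}C[0,1]$ via Proposition~\ref{strongly-condition} for strong continuity. The only difference is presentational --- you package the two implications as a single sharp-constant computation, whereas the paper proves (ii)$\Rightarrow$(i) directly and (i)$\Rightarrow$(ii) by contradiction --- and the measurability/localization subtlety you flag is exactly the point the paper handles with the measurable selections $v_j$ from Lemma~\ref{lema sucesiones}.
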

\begin{proof}
$(a)$  $(ii)\Rightarrow(i)$: We first show that  for every $t> 0$ the map $T_{t}$ is a continuous operator on $L^1_\rho(L(G))$ and $||T_t||\leq Me^{wt}$. 

Fix $t_0> 0$ and let  $n_0\in\N\cup\{0\}$ be such that $n_0\leq t_0< n_0+1$ and $f\in L^1_{\rho}(L(G))$.  Taking into account \eqref{disnorm} and \eqref{n},  we compute  the norm of $T_{t_0}f$ and apply  condition $(ii)$, so that:  
\begin{align*}
    &\|T_{t_0}f\|_{1,\rho}=\\ 
    &=\sum_{i\in I }\int_0^{n_0+1-t_0}\left|\left(\mathcal{A}^{n_0}f\right)_i(s+t_0-n_0)\right|\rho_i(s)ds\\
&\hspace{6em}+\sum_{i\in I }\int^1_{n_0+1-t_0}\left|\left(\mathcal{A}^{n_0+1}f\right)_i(s+t_0-n_0-1)\right|\rho_i(s)ds
   \\
   &=\sum_{i\in I }\int_0^{n_0+1-t_0}\left|\sum_{j\in M_{n_0}(i)}f_j(s+t_0-n_0)\right|\rho_i(s)ds\\
    &\hspace{6em}+\sum_{i\in I }\int_{n_0+1-t_0}^1\left|\sum_{j\in M_{n_0+1}(i)}f_j(s+t_0-n_0-1)\right|\rho_i(s)ds\\
    &\leq \sum_{i\in I }\int_0^{n_0+1-t_0}\sum_{j\in M_{n_0}(i)}|f_j(s+t_0-n_0)|\rho_i(s)ds\\
&\hspace{6em}+\sum_{i\in I }\int_{n_0+1-t_0}^1\sum_{j\in M_{n_0+1}(i)}|f_j(s+t_0-n_0-1)|\rho_i(s)
    ds\\
    &\leq Me^{wt_0}\sum_{i\in I }\int_0^{n_0+1-t_0}|f_i(s+t_0-n_0)|\rho_i(s+t_0-n_0)ds\\
&\hspace{6em}+Me^{wt_0}\sum_{i\in I }\int_{n_0+1-t_0}^1|f_i(s+t_0-n_0-1)|\rho_i(s+t_0-n_0-1)ds\\
    &=Me^{wt_0} \sum_{i\in I }\int_{0}^{1}|f_i(s)|\rho_i(s)ds=Me^{wt_0}\|f\|_{1,\rho}.
\end{align*}
As a consequence, 
the family $\{T_t\}_{t\geq 0}$ is locally equicontinuous on $L^1_\rho(L(G))$. 

\hspace{1em}  The strong continuity of the semigroup is established by showing that $$\lim_{t\to 0} \|T_t g- g\|_{1,\rho} = 0$$ holds for $g$ in a dense subset of $L^1_\rho(L(G))$, specifically for $g \in \oplus_{i\in I } C_c(0,1)$ (see Proposition \ref{strongly-condition}).

\hspace{1em} To this end, let $g\in \oplus_{i\in I } C_c(0,1)$, $I_g:=\{i\in I \hspace{0.3em}|\hspace{0.3em}g_i\neq  0 \}$ and $\epsilon>0$, and observe that $g_i$ is uniformly continuous for each $i\in I_g$. Since $I_g$ is finite,   there exists some $0<\overline t\leq 1$ such that  
$$\left|g_i(s+t)-g_i(s)\right|<\frac{\epsilon}{2\sum_{i\in I_g}\int_{0}^1\rho_i(s)ds}$$
for all $s\in[0, 1)$, $0<t\leq \overline t$ and $i\in I_g$.  Then, for $0<t\leq \overline t$, we obtain
\begin{align*}
    &\sum_{i\in I}\int_{0}^{1-t}\left|g_i(s+t)-g_i(s)\right|\rho_i(s)ds=\sum_{i\in I_g}\int_{0}^{1-t}\left|g_i(s+t)-g_i(s)\right|\rho_i(s)ds<\epsilon/2.
\end{align*}
On the other hand, there exists some $0<t_0\leq \overline t$ such that for all $t\leq t_0$,
$$Me^{wt}\sum_{i\in I}\int_{0}^t|g_i(s)|\rho_i(s)ds=Me^{wt}\sum_{i\in I_g}\int_{0}^t|g_i(s)|\rho_i(s)ds<\epsilon/4$$
and also 
$$\sum_{i\in I}\int_{1-t}^1|g_i(s)|\rho_i(s)ds=\sum_{i\in I_g}\int_{1-t}^1|g_i(s)|\rho_i(s)ds<\epsilon/4.$$
As a consequence, for $t\leq t_0$, we obtain: 

\begin{align*}
    &\|T_tg-g\|_{1,\rho}=\\
    &=\sum_{i\in I_g} \int_{0}^{1-t}\left|g_i(s+t)-g_i(s)\right|\rho_i(s)ds
    +\sum_{i\in I }\int_{1-t}^1|(\mathcal{A}g)_i(s+t-1)-g_i(s)|\rho_i(s)ds\\
    &\leq \sum_{i\in I}\int_{0}^{1-t}\left|g_i(s+t)-g_i(s)\right|\rho_i(s)ds
    \\
&\hspace{7em}+\sum_{i\in I }\int_{1-t}^1|(\mathcal{A}g)_i(s+t-1)|\rho_i(s)ds+\sum_{i\in I}\int_{1-t}^1|g_i(s)|\rho_i(s)ds
    \\
    &\leq\sum_{i\in I}\int_{0}^{1-t}\left|g_i(s+t)-g_i(s)\right|\rho_i(s)ds
    \\
&\hspace{7em}+\sum_{i\in I }\int_{1-t}^1 \left|\sum_{j\in M_1(i)}g_j(s+t-1)\right|\rho_i(s)ds+\sum_{i\in I}\int_{1-t}^1|g_i(s)|\rho_i(s)ds
    \\
    &\leq\sum_{i\in I}\int_{0}^{1-t}\left|g_i(s+t)-g_i(s)\right|\rho_i(s)ds
    \\
&\hspace{7em}+Me^{wt}\sum_{i\in I }\int_{1-t}^1 \sum_{j\in M_1(i)}|g_j(s+t-1)|\rho_j(s+t-1)ds\\
    &\hspace{7em}+ \sum_{i\in I_g}\int_{1-t}^1|g_i(s)|\rho_i(s)ds\\
    &\leq\sum_{i\in I}\int_{0}^{1-t}\left|g_i(s+t)-g_i(s)\right|\rho_i(s)ds
    \\
&\hspace{7em}+Me^{wt}\sum_{i\in I }\int_{0}^t |g_i(s)|\rho_i(s)ds\\
    &\hspace{7em}+ \sum_{i\in I_g}\int_{1-t}^1|g_i(s)|\rho_i(s)ds<\epsilon, \end{align*} 
 where we applied \eqref{disnorm} to get the last inequality.  
 
$(i)\Rightarrow(ii)$: Let us now assume that $\{T_t\}_{t\geq 0}$ is a strongly continuous semigroup in $L^1_\rho(L(G))$, such that $\|T_t\|\leq Me^{wt}$ for all $t\geq 0$. Assume that $(ii)$ does not hold.
Then there exist  $t_0>0$ and $i_0\in I $ such that the set
$$B:=\{s\in [0,1)\,:\, \rho_{i_0}(s)>Me^{w t_0}\inf_{j\in M_{n(t_0,s)}(i_0)}\rho_j(s+t_0-n(t_0, s)\}$$
has Lebesgue measure $\lambda(B)>0$.
Let $n_0\in\N\cup\{0\}$ such that $n_0\leq t_0<n_0+1$. Then 
$B=B_1\cup B_2$, where 
\begin{align*}&B_1:=\left\{s\in[0, n_0+1-t_0): \rho_{i_0}(s)>Me^{wt_0}\inf_{j\in M_{n_0}(i_0)}\rho_{j}(s+t_0-n_0)\right\},
\\&B_2:=\left\{s\in[n_0+1-t_0, 1): \rho_{i_0}(s)>Me^{wt_0}\inf_{j\in M_{n_0+1}(i_0)}\rho_{j}(s+t_0-n_0-1)\right\}.\end{align*}
Clearly either $\lambda(B_1)>0 $ or $\lambda(B_2)>0$.
Assume without loss of generality that $\lambda(B_1)>0$ (the argument for $\lambda(B_2)>0$ is entirely analogous).
Since 
$$B_1\subseteq \bigcup_{j\in M_{n_0}(i_0)}\{s\in [0,n_0+1-t_0]\,:\, \rho_{i_0}(s)>M e^{wt_0}\rho_j(s+t-n_0)\}$$
we get that there exists
$j_0\in M_{n_0}(i_0)$
such that the set 
$$B_1^{j_0}=\{s\in [0,n_0+1-t_
0]\,:\, \rho_{i_0}(s)>M e^{wt_0}\rho_{j_0}(s+t-n_0)\}$$
has strictly positive Lebesgue measure.
Define the function 
$f:=(f_i)_{i\in I }$ as
$$f_{i}(u):=
\left\{\begin{array}{@{}l@{}}
    \frac{1}{\rho_{j_0}(u)}\hspace{2em} \text{ if }i=j_{0}\text{ and }u\in B_1^{j_0}+t_0-n_0; \\
    0 \hspace{3.7em} \text{ otherwise. }
  \end{array}\right.\,$$
 Clearly $f\in L^1_\rho(L(G))$ and $$\|f\|_{1,\rho}=\sum_{i\in I }\int_{0}^1|f_i(s)|\rho_i(s)ds=\lambda(B_1^{j_0})>0.$$
 Nevertheless, 
\begin{align*}
    \|T_{t_0}f\|
    &=\sum_{i\in I }\int_0^{n_0+1-t_0}|(\mathcal{A}^{n_0}f)_i(s+t_0-n_0)|\rho_i(s)ds+\\
&+\sum_{i\in I }\int^1_{n_0+1-t_0}|(\mathcal{A}^{n_0+1}f)_i(s+t_0-n_0-1)|\rho_i(s)ds\\
    &=\sum_{i\in I }\int_0^{n_0+1-t_0}|\sum_{j\in M_{n_0}(i)}f_j(s+t_0-n_0)|\rho_i(s)ds\\
    &+\sum_{i\in I }\int^1_{n_0+1-t_0}|\sum_{j\in M_{n_0+1}(i)}f_j(s+t_0-n_0-1)|\rho_i(s)ds\\
    &\geq \sum_{i\in I }\int_0^{n_0+1-t_0}|\sum_{j\in M_{n_0}(i)}f_j(s+t_0-n_0)|\rho_i(s)ds =\int_0^{n_0+1-t_0}|f_{j_0}(s+t_0-n_0)|\rho_{i_0}(s)ds\\
&=\int_{B_1^{j_0}}|f_{j_0}(s+t_0-n_0)|\rho_{i_0}(s)ds\\
    & >Me^{wt_0}\int_{B_1^{j_0}}|f_{j_0}(s+t_0-n_0)|\rho_{j_0}(s+t_0-n_0)ds=Me^{wt_0}\lambda(B_1^{j_0})=Me^{wt_0}\|f\|_{1,\rho},
    \end{align*}
    which contradicts (i). Hence $(a.ii)$ must hold and the conclusion follows. \\
    
   $(b)$ $(ii)\Rightarrow (i)$: Consider $t_0>0$, $n_0\in\N\cup\{0\}$ such that $n_0\leq t_0<n_0+1$ and $f\in L^p_{\rho}(L(G))$. Let $p^*$ be the conjugate exponent of $p$. By observing that $p^*/p=1/(p-1)$, by $(ii)$, Hölder's inequality and \eqref{disnorm}, we obtain
   \begin{align*}
    &\|T_{t_0}f\|_{p,\rho}^p=\\
       &=\sum_{i\in I }\int_{0}^{n_0+1-t_0}|(\mathcal{A}^{n_0}f)_i(s+t_0-n_0)|^p\rho_i(s)ds+\\
       &\hspace{6em}+\sum_{i\in I }\int_{n_0+1-t_0}^{1}|(\mathcal{A}^{n_0+1}f)_i(s+t_0-n_0-1)|^p\rho_i(s)ds\\
        &=\sum_{i\in I }\int_{0}^{n_0+1-t_0}\left|\sum_{j\in M_{n_0}(i)}f_j(s+t_0-n_0)\right|^p\rho_i(s)ds+\\
       &\hspace{6em}+\sum_{i\in I }\int_{n_0+1-t_0}^{1}\left|\sum_{j\in M_{n_0+1}(i)}f_j(s+t_0-n_0-1)\right|^p\rho_i(s)ds\\
       &=\sum_{i\in I }\int_{0}^{n_0+1-t_0}\left|\sum_{j\in M_{n_0}(i)}f_j(s+t_0-n_0)\frac{\rho_j(s+t_0-n_0)^{1/p}}{\rho_j(s+t_0-n_0)^{1/p}}\right|^p\rho_i(s)ds+\\
&\hspace{6em}+\sum_{i\in I }\int_{n_0+1-t_0}^{1}\left|\sum_{j\in M_{n_0+1}(i)}f_j(s+t_0-n_0-1)\frac{\rho_j(s+t_0-n_0-1)^{1/p}}{\rho_j(s+t_0-n_0-1)^{1/p}}\right|^p\rho_i(s)ds\\
       &\leq \sum_{i\in I }\int_{0}^{n_0+1-t_0}\left(\sum_{j\in M_{n_0}(i)}|f_j(s+t_0-n_0)|^p\rho_j(s+t_0-n_0)\right)\times\\
       &\hspace{14em}\times\left(\sum_{j\in M_{n_0}(i)}\frac{1}{\rho_j(s+t_0-n_0)^{1/(p-1)}}\right)^{p-1}\rho_i(s)ds\\
       &\hspace{3em}+\sum_{i\in I }\int_{n_0+1-t_0}^{1}\left(\sum_{j\in M_{n_0+1}(i)}|f_j(s+t_0-n_0-1)|^p\rho_j(s+t_0-n_0-1)\right)\times\\
       &\hspace{14em}\times\left(\sum_{j\in M_{n_0+1}(i)}\frac{1}{\rho_j(s+t_0-n_0-1)^{1/(p-1)}}\right)^{p-1}\rho_i(s)ds\\
       &\leq M^pe^{pwt_0}\sum_{i\in I }\int_0^{n_0+1-t_0}\sum_{j\in M_{n_0}(i)}|f_j(s+t_0-n_0)|^p\rho_j(s+t_0-n_0)ds+\\
&\hspace{4em}+M^pe^{pwt_0}\sum_{i\in I }\int_{n_0+1-t_0}^1\sum_{j\in M_{n_0+1}(j)}|f_j(s+t_0-n_0-1)|^p\rho_j(s+t_0-n_0-1)ds\\
       &\leq M^pe^{pwt_0}\|f\|_{p,\rho}^p.
   \end{align*}
   Hence, it follows that 
   the family $\{T_t\}_{t\geq 0}$ is locally equicontinuous. 

   \hspace{1em} Now we prove that  the semigroup $\{T_t\}_{t\geq 0}$ is strongly continuous. Let $g\in \oplus_{i\in I } C_c(0,1)$, $I_g:=\{i\in I \hspace{0.3em}|\hspace{0.3em}g_i\neq  0 \}$ and $\epsilon>0$.  As in case $(a)$, it is easy to see that there is some $t_0\in(0,1]$ such that for $0<t\leq t_0$:
\begin{align*}
    &\sum_{j\in I}\int_{0}^{1-t}\left|g_j(s+t)-g_j(s)\right|^p\rho_j(s)ds=\sum_{j\in I_g}\int_{0}^{1-t}\left|g_j(s+t)-g_j(s)\right|^p\rho_j(s)ds<\epsilon/2,\\
    &\sum_{j\in I}\int_{1-t}^1|g_j(s)|^p\rho_j(s)ds=\sum_{j\in I_g}\int_{1-t}^1|g_j(s)|^p\rho_j(s)ds<\frac{\epsilon}{2^{p+1}},\\
&M^pe^{pwt}\sum_{i\in I}\int_{0}^t|g_i(s)|^p\rho_i(s)ds=M^pe^{pwt}\sum_{i\in I_g}\int_{0}^t|g_i(s)|^p\rho_i(s)ds<\frac{\epsilon}{2^{p+1}}.\end{align*}
Therefore, by condition $(b.ii)$, for $t\leq t_0$ we have: 
\begin{align*}
    &\|T_tg-g\|_{p,\rho}^p=\\
    &=\sum_{i\in I }\int_{0}^{1-t}\left|g_i(s+t)-g_i(s)\right|^p\rho_i(s)ds
    +\sum_{i\in I }\int_{1-t}^1|(\mathcal{A}g)_i(s+t-1)-g(s)|^p\rho_i(s)ds\\
    &\leq \sum_{i\in I}\int_{0}^{1-t}\left|g_i(s+t)-g_i(s)\right|^p\rho_i(s)ds
    \\
    &\hspace{4em}+2^{p-1}\sum_{i\in I }\int_{1-t}^1|(\mathcal{A}g)_i(s+t-1)|^p\rho_i(s)ds+2^{p-1}\sum_{i\in I}\int_{1-t}^1|g_i(s)|^p\rho_i(s)ds
    \\
    &\leq\sum_{i\in I}\int_{0}^{1-t}\left|g_i(s+t)-g_i(s)\right|^p\rho_i(s)ds
    \\
    &\hspace{4em}+2^{p-1}\sum_{i\in I }\int_{1-t}^1 \left|\sum_{j\in M_1(i)}g_j(s+t-1)\frac{\rho_j(s+t-1)^{1/p}}{\rho_j(s+t-1)^{1/p}}\right|^p\rho_i(s)ds+2^{p-1}\sum_{i\in I_g}\int_{1-t}^1|g_i(s)|^p\rho_i(s)ds
    \\
&\leq\sum_{i\in I}\int_{0}^{1-t}\left|g_i(s+t)-g_i(s)\right|^p\rho_i(s)ds
    \\
    &\hspace{2em}+2^{p-1}\sum_{i\in I }\int_{1-t}^1\left( \sum_{j\in M_1(i)}|g_j(s+t-1)|^p\rho_j(s+t-1)\right)
    \left(\sum_{j\in M_{1}(i)}\frac{1}{\rho_j(s+t-1)^{1/(p-1)}}\right)^{p-1}\rho_i(s)ds\\
    &\hspace{2em} +2^{p-1}\sum_{i\in I_g}\int_{1-t}^1|g_i(s)|^p\rho_i(s)ds
    \\
    &\leq\sum_{i\in I}\int_{0}^{1-t}\left|g_i(s+t)-g_i(s)\right|^p\rho_i(s)ds
    \\
    &\hspace{4em}+M^pe^{pwt}2^{p-1}\sum_{i\in I }\int_{1-t}^1 \sum_{j\in M_1(i)}|g_j(s+t-1)|^p\rho_j(s+t-1)ds+2^{p-1}\sum_{i\in I_g}\int_{1-t}^1|g_i(s)|^p\rho_i(s)ds
    \\ &\leq\sum_{i\in I}\int_{0}^{1-t}\left|g_i(s+t)-g_i(s)\right|^p\rho_i(s)ds
    \\
    &\hspace{4em}+M^pe^{pwt}2^{p-1}\sum_{i\in I}\int_{0}^t|g_i(u)|^p\rho_i(u)du+2^{p-1}\sum_{i\in I}\int_{1-t}^1|g_i(s)|^p\rho_i(s)ds<\epsilon,
\end{align*}
where we applied \eqref{disnorm} to get the second-last inequality.
Hence the conclusion follows by Theorem \ref{strongly-condition}.

$(i)\Rightarrow(ii)$. Assume that $(b.ii)$ is not satisfied.  Then there exist some $t_0>0$, $i_0\in I $, 
such that the set
$$B:=\left \{s\in [0,1)\,:\, \left(\sum_{j\in M_{n(t_0,s)}(i_0)}\frac{1}{\rho_j(s+t_0-n(t_0,s))^{1/(p-1)}}\right)^{1-p}< \frac{\rho_{i_0}(s)}{Me^{wt_0}}\right\}$$
has strictly positive Lebesgue measure.
Let $n_0\in\N$ such that $n_0\leq t_0<n_0+1$. Then $B=B_1\cup B_2$ where 
\begin{align*}&B_1:=\left\{s\in[0, n_0+1-t_0): \left(\sum_{j\in M_{n_0}(i_0)}\frac{1}{\rho_j(s+t_0-n_0)^{1/(p-1)}}\right)^{1-p}< \frac{\rho_{i_0}(s)}{M^{p}e^{pwt_0}}\right\},\\
&B_2:=\left\{s\in[n_0+1-t_0, 1): \left(\sum_{j\in M_{n_0+1}(i_0)}\frac{1}{\rho_j(s+t_0-n_0-1)^{1/(p-1)}}\right)^{1-p}<\frac{\rho_{i_0}(s)}{M^{p}e^{pwt_0}}\right\}, \end{align*}
hence $B_1$ or $B_2$ have strictly positive Lebesgue measure. Assume without loss of generality that $\lambda(B_1)>0$. 

By Lemma \ref{lema sucesiones}, there exists a sequence $(v_j(\cdot +t_0-n_0))_{j\in M_{n_0}(i_0)}$ such that every $v_j (\cdot +t_0-n_0)$ is measurable on $B_1$, for every $s\in B_1$
$$\sum_{j\in M_{n_0}(i_0)}|v_j(s+t_0-n_0)|=1,$$
and also 
$$\sum_{j\in M_{n_0}(i_0)}|v_j(s+t_0-n_0)|^p\rho_j(s+t_0-n_0)<\frac{\rho_{i_0}(s)}{M^{p}e^{pwt_0}}.$$
Define the function $f$ on $L(G)$ such that 
$$f_{j}(t):=
\left\{\begin{array}{@{}l@{}}
    v_j(t)\hspace{3em} \text{ if }j\in M_{n_0}(i_{0})\text{ and }t\in B_1+t_0-n_0; \\
    0 \hspace{4.5em} \text{ otherwise. }
  \end{array}\right.\,$$

We have the following.

\begin{align*}
   &\|T_{t_0}f\|_{p,\rho}^p=\\
       &=\sum_{i\in I }\int_{0}^{n_0+1-t_0}|(\mathcal{A}^{n_0}f)_i(s+t_0-n_0)|^p\rho_i(s)ds+\\
       &\hspace{12em}+\sum_{i\in I }\int_{n_0+1-t_0}^{1}|(\mathcal{A}^{n_0+1}f)_i(s+t_0-n_0-1)|^p\rho_i(s)ds\\
        &=\sum_{i\in I }\int_{0}^{n_0+1-t_0}\left|\sum_{j\in M_{n_0}(i)}f_j(s+t_0-n_0)\right|^p\rho_i(s)ds+\\
       &\hspace{12em}+\sum_{i\in I }\int_{n_0+1-t_0}^{1}\left|\sum_{j\in M_{n_0+1}(i_0)}f_j(s+t_0-n_0-1)\right|^p\rho_i(s)ds\\
       &=\int_{0}^{n_0+1-t_0}\left|\sum_{j\in M_{n_0(i_0)}}f_j(s+t_0-n_0)\right|^p\rho_{i_0}(s)ds=\int_{B_1}\rho_{i_0}(s)ds.
       \end{align*}
       Nevertheless, 
\begin{align*}
 &\|f\|_{p,\rho}^p=\sum_{i\in I }\int_{0}^1|f_i(s)|^p\rho_i(s)ds=\int_{0}^1\sum_{i\in I }|f_i(s)|^p\rho_i(s)ds\\
    &=\int_{B_1}\sum_{j\in M_{n_0}(i_0)}|f_j(s+t_0-n_0)|^p\rho_j(s+t_0-n_0)ds\\
    &=\int_{B_1}\sum_{j\in M_{n_0}(i_0)}|v_j(s+t_0-n_0)|^p\rho_j(s+t_0-n_0)ds\\
    &<
    \int_{B_1}\frac{\rho_{i_0}(s)}{M^{p}e^{pwt_0}}ds=\frac{\|T_{t_0}f\|^p}{M^pe^{pwt_0}}, 
\end{align*}
which contradicts the assumption, and therefore $(b.ii)$ must hold. 
\end{proof}

\begin{remark}
    When considering the specific tree structure described in Remark \ref{Ltr}, conditions (a)(ii) and (b)(ii) become: 
    $$\frac{\rho_i(s)}{\rho_{i+n(t,s)}(s+t-n(t,s))}\leq M^pe^{pw t}$$
    for all $i\in \N$, $t\geq 0$ and almost every $s\in [0,1)$. By translating this property to the  weight $\widetilde\rho$ on the half-line $[0, +\infty)$, we get that:
    $$\widetilde \rho(u)\leq M^pe^{pwt} \widetilde \rho(u+t), \qquad t\geq0,\ a.e.\ u\geq 0. $$
    This inequality is the precise condition that guarantees the strong continuity of the classical left translation semigroup on the space $L^p_{\widetilde \rho}([0, +\infty))$ (see \cite[Example 7.4]{Alfred}).
    
\end{remark}

In the following,  and in order to avoid some technical problems, we will consider a slightly stronger condition for the weight $\rho$.

\begin{definition} Let  $\rho=(\rho_i)_{i\in I}$ be a weight on the directed metric tree $L(G)$ such that  for every $i\in\ I$ and for every $s\in [0,1)$ it holds that $\rho_i(s)>0$.
    Then  $\rho$ is said to be  a $p$-admissible weight for $L(G)$ if there exists $M\geq 1 $ and $w\in\R$  such that for all $i\in I$,  $s\in[0, 1)$ and  $t\geq 0$:

          \begin{align}\rho_i(s)\leq Me^{wt}\inf_{j\in M_{n(t,s)}(i)}\rho_j(s+t-n(t,s)) \qquad &\mbox{ if } p=1\label{p=1}\\
          \left(\sum_{j\in M_{n(t,s)}(i)}\frac{1}{\rho_j(s+t-n(t,s))^{1/(p-1)}}\right)^{p-1}\leq M^pe^{pwt}\frac{1}{\rho_i(s)},\qquad  &\mbox{ if } 1<p<\infty.\label{1<p} 
          \end{align}           
\end{definition}

\begin{remark}\label{est} Observe that under conditions \eqref{p=1} and \eqref{1<p},  it holds that,  for every $i\in I$,  $s,t\geq 0$ such that $0\leq s+t<1$

\begin{align*}
\rho_i(s)\leq Me^{w t} \rho_i(s+t)
\end{align*}
 and therefore, $0<\rho_i(0)\leq Me^{|w|} \rho_i(t)$ for every $t\in [0,1)$. Hence 
we get that  $\inf_{s\in [0,1)}\rho_i(s)>0$. 
 Moreover, 
 \begin{align*}
&\rho_i(s)\leq Me^{w (1-s)} \inf_{j\in M_1(i)}\rho_j(0) \qquad &\mbox{ if } p=1\\
&
\rho_i(s) \leq Me^{w(1-s)}\left(\sum_{j\in M_1(i)}\rho_j(0)^{-1/(p-1)}\right)^{1-p} \qquad &\mbox{ if } 1<p<\infty.
\end{align*}

In any case, $\rho_i$ is bounded  and bounded away from $0$ on $[0,1)$.

\end{remark}

\section{Hypercyclicity for the left translation semigroup on directed metric trees}
\hspace{1em}In this section we   provide necessary and sufficient conditions for a $p$-admissible weight sequence $(\rho_i)_{i\in I }$ that ensure hypercyclicity for the translation semigroup on directed metric trees. We begin by observing that, 
    if the directed tree $G$ has a leaf, then the left translation semigroup cannot be hypercyclic in any admissible space $L^p_{\rho}(L(G))$. 
Indeed, let $f\in L^p_{\rho}(L(G))$ and $e=(v_{i_0}, v_{j_0})\in E$ be such that $v_{j_0}$ is a leaf of $G$ . Then it is clear that $(T_t f)_{e}(s)=0$ for all $t> 1$ and $0\leq s< 1$, and therefore the semigroup cannot be hypercyclic.

\begin{theorem}\label{weakly mixing translation}
Let $L(G)$ be the directed metric tree associated with $G=(V, E)$. Assume that $L(G)$ is rooted and that $G$ is without leaves.
\begin{enumerate}[(a)]
\item The following assertions are equivalent:
\begin{enumerate}[(i)]
\item the translation semigroup is hypercyclic on $L_{\rho}^1(L(G))$;
\item the translation semigroup is weakly mixing on $L_{\rho}^1(L(G))$;
\item there exists some increasing sequence $(n_h)_{h\in\N}\in\N^{\N}$ such that for all $i\in I$:
\begin{equation*}
\lim_{h\rightarrow\infty}\inf_{j\in M_{n_h}(i)}\inf_{s\in[0, 1)}\rho_j(s)=0.
\end{equation*}
\end{enumerate}
\item If $1<p<\infty$, the following assertions are equivalent:
\begin{enumerate}[(i)]
\item the translation semigroup is hypercyclic on $L^p_{\rho}(L(G))$;
 \item the translation semigroup is weakly mixing on $L^p_{\rho}(L(G))$;
\item there exists some increasing sequence $(n_h)_{h\in\N}\in\N^{\N}$ such that for all $i\in I$:
$$\lim_{h\to\infty}\sup_{s\in[0, 1)}\left(\sum_{j\in M_{n_h}(i)}\frac{1}{\rho_j(s)^{1/(p-1)}}\right)=\infty.$$

\end{enumerate}
\end{enumerate}
\end{theorem}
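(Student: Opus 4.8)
The plan is to prove both (a) and (b) by the same scheme, the only difference being whether one optimizes the distribution of mass over the descendant edges via Lemma~\ref{lema sucesiones}. Since every weakly mixing semigroup is hypercyclic, the implications $(ii)\Rightarrow(i)$ are free, and it remains to establish $(i)\Rightarrow(iii)$ and $(iii)\Rightarrow(ii)$. Throughout I work at integer times, where $T_n=T_1^{\,n}$ and $(T_ng)_i(s)=\sum_{j\in M_n(i)}g_j(s)$ on $[0,1[$\,; by Theorem~\ref{hypercyclic-autonomous} hypercyclicity of the semigroup is equivalent to that of $T_1$, and by Theorem~\ref{weaklymixing-mixingdiscretization} it suffices to exhibit $T_1$ as weakly mixing to conclude the semigroup is weakly mixing.

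For $(iii)\Rightarrow(ii)$ I would verify the Hypercyclicity Criterion for $T_1$ along the sequence $m_k:=n_k-1$, with $X_0=Y_0=F^p_\rho$ dense (the target side further restricted to $\oplus_i C[0,1]$). Because $L(G)$ is rooted, a finitely supported $x$ has edges of bounded depth, so $T_{m_k}x=0$ once $m_k$ exceeds that depth; because $G$ has no leaves, every $M_{m_k}(i)$ is nonempty and a right inverse exists. The decisive point is the choice of right inverse: given the low-weight datum furnished by (iii) at depth $n_k$, I pass to the parent edges at depth $n_k-1$ and place $y_i$ there---on a single parent edge when $p=1$, and when $1<p<\infty$ spread over $M_{m_k}(i)$ via the nonnegative extremal family $(v_j)$ of Lemma~\ref{lema sucesiones} (so that $\sum_j v_j=1$). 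Then $T_{m_k}S_{m_k}y=y$ holds identically, the disjointness~\eqref{int} ruling out interference between the branches of distinct $i$. The estimate $\|S_{m_k}y\|\to0$ is where admissibility enters crucially: the inequalities~\eqref{p=1}, \eqref{1<p} propagate the smallness of $\rho_j$ at a single point of a depth-$n_k$ edge $e_j$ into uniform-in-$s$ smallness of the parent weight $\rho_{j^-}$ on all of $[0,1]$ (respectively, they turn $\sup_s\sum_{j\in M_{n_k}(i)}\rho_j(s)^{-1/(p-1)}\to\infty$ into $\inf_s\sum_{\ell\in M_{n_k-1}(i)}\rho_\ell(s)^{-1/(p-1)}\to\infty$, using $M_{n_k}(i)=\bigsqcup_{\ell}M_1(\ell)$), so that the norm of $S_{m_k}y$ collapses.

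For $(i)\Rightarrow(iii)$ I would invoke Birkhoff transitivity of $T_1$. For a finite $F\subseteq I$ and $\epsilon>0$, applying transitivity to the $\epsilon$-balls around $0$ and around $u^{(F)}:=\sum_{i\in F}\mathbf 1_{e_i}\in L^p_\rho$ yields $g$ with $\|g\|<\epsilon$ and $\|T_ng-u^{(F)}\|<\epsilon$ for some $n$. For each $i\in F$ this forces $\|(T_ng)_i\|_{p,\rho_i}\ge a_i^{1/p}-\epsilon$ (with $a_i=\int_0^1\rho_i>0$) while $\sum_{j\in M_n(i)}\|g_j\|_{p,\rho_j}^p<\epsilon^p$; writing $(T_ng)_i=\sum_{j\in M_n(i)}g_j$ and applying Hölder's inequality exactly as in the proof of strong continuity (the triangle inequality when $p=1$) converts this into a lower bound forcing $\sup_s\big(\sum_{j\in M_n(i)}\rho_j(s)^{-1/(p-1)}\big)$ large (resp. $\inf_{j\in M_n(i)}\inf_s\rho_j$ small), simultaneously for all $i\in F$, with the bound blowing up as $\epsilon\to0$. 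Letting $F=F_k\uparrow I$ and $\epsilon=\epsilon_k\downarrow0$ and passing to an increasing subsequence produces a single $(n_k)$ valid for every $i$; that $n_k\to\infty$ is guaranteed because admissibility~\eqref{p=1}, \eqref{1<p} bounds the relevant quantity for each fixed $n$.

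The main obstacle is the sufficiency direction, specifically the transition from the pointwise (supremum-type) smallness recorded in condition (iii) to the uniform-in-$s$ smallness needed to make $\|S_{m_k}y\|\to0$. This is resolved by the level shift $m_k=n_k-1$: the admissibility inequalities only control a weight by the values of its descendants, so the honest uniform smallness lives one level up, on the parent edges, which is exactly why the right inverse is built at depth $n_k-1$ rather than $n_k$. The secondary difficulty, producing one sequence $(n_k)$ that serves all edges at once, is handled by the diagonalization over $F_k\uparrow I$, the admissibility estimates ensuring $n_k\to\infty$.
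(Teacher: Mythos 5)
Your proposal is correct, and its technical core coincides with the paper's: the level shift from depth $n_k$ to $n_k-1$ via the admissibility inequalities \eqref{p=1}, \eqref{1<p} (turning pointwise smallness at $s_{i,n_k}$ into uniform-in-$s$ smallness one level up, using $M_{n_k}(i)=\bigsqcup_{\ell\in M_{n_k-1}(i)}M_1(\ell)$), the extremal families of Lemma \ref{lema sucesiones} to distribute mass over $M_{n_k-1}(i)$ with $\sum_j v_j=1$, the disjointness \eqref{int}, and the H\"older (resp.\ triangle) inequality on the necessity side. What differs is the logical organization. The paper closes the equivalence as $(i)\Leftrightarrow(ii)$, $(iii)\Rightarrow(i)$, $(ii)\Rightarrow(iii)$: it gets $(i)\Leftrightarrow(ii)$ for free because $F^p_\rho$ is a dense set of vectors with bounded orbits (rootedness makes $T_nf=0$ eventually for finitely supported $f$); it proves $(iii)\Rightarrow(i)$ by exactly solving $T_{n_h-1}g=f_2$ with $g$ close to $f_1$; and it proves $(ii)\Rightarrow(iii)$ by contradiction, extracting a hereditarily transitive discretization $(T_{n_k})$ from weak mixing and exhibiting a target $g_{i_0}=2^{1-1/p}\rho_{i_0}^{-1/p}$ that no $T_{n_{k(\xi)}}f$ with $\|f\|_{p,\rho}^p<1/(2K)$ can approach within $(1/2)^{1/p}$. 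You instead run $(iii)\Rightarrow(ii)\Rightarrow(i)\Rightarrow(iii)$: the Hypercyclicity Criterion along $m_k=n_k-1$ delivers weak mixing directly (so you never need the bounded-orbit equivalence), while your necessity argument starts from mere transitivity of $T_1$, which costs you the diagonalization over $F_k\uparrow I$, $\epsilon_k\downarrow 0$ together with the observation that admissibility bounds $\sup_s\sum_{j\in M_n(i)}\rho_j(s)^{-1/(p-1)}$ for each fixed $n$ and $i$ (forcing $n_k\to\infty$), but spares you hereditary transitivity. Your necessity route is in fact the one the paper itself uses in the unrooted case (Theorem \ref{TH3}), so both variants are sound; yours proves the slightly stronger implication hypercyclic $\Rightarrow$ (iii) directly, while the paper's is shorter on the sufficiency side because the upgrade from hypercyclic to weakly mixing comes for free from bounded orbits.
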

\begin{proof}
We will only provide the proof of assertion (b), as the proof of (a) is analogous.

The equivalence of $(ii)$ and $(i)$ follows  from the fact that $F_\rho^p$ is a dense subspace such that the orbit of each $f\in F^p_\rho$ is bounded (see \cite[Theorem 2.48]{Alfred}). 

Along the proof, it will be useful to observe that the limit in $(iii)$ can be reformulated as
$$\lim_{h\rightarrow \infty}\inf_{s\in [0, 1)}\left(\sum_{j\in M_{n_h}(i)}\frac{1}{\rho_j(s)^{1/(p-1)}}\right)^{-1}=0.$$

We will prove that $(iii)\Rightarrow(i)$, by showing that  given $\epsilon>0$, and $f_1, f_2\in F^p_\rho$, there is some $g\in L_{\rho}^p(L(G))$ and $t>0$ such that
$$\|f_1-g\|_{p,\rho} <\epsilon\quad\text{ and }\quad T_{t}g=f_2,$$
which by density of $F^p_\rho$ implies that $\{T_t\}_{t\geq 0}$ is topologically transitive and consequently the translation semigroup will be hypercyclic.

Observe that since $f_1\in F^p$, there is some $N_0\in \N$ such that
$$T_n f_1=0$$for all $n\geq N_0$. Now, take $M\geq 1$ and $w\in\R$ such that $\rho$ satisfies the condition \eqref{1<p}. Furthermore, define $I_1:=\{i\in I : (f_{1})_i\neq  0\}$, $I_2:=\{i\in I : (f_{2})_i  \neq  0\}$ and the constants:
$$C_1:=\min_{i\in I_{2}}{{\inf}}_{s\in [0, 1)}\rho_i(s)>0,\quad C_2:=\max_{i\in I_{2}}{{\sup}}_{s\in [0, 1)}\rho_i(s)>0.$$
Given $\delta>0$,  by $(iii)$ we have that there is some $n_h\in\N$ such that $n_h>N_0$ and for all $i\in I_2$ there exist some $s_{i, n_h}\in [0, 1)$ with
\begin{equation}\label{ec-b1}
 \left(\sum_{j\in M_{n_h(i)}}\frac{1}{\rho_j(s_{i, n_h})^{1/(p-1)}}\right)^{1-p}<\delta.
\end{equation}

By Lemma \ref{lema sucesiones}, for each $i\in I_2$  there exists a sequence $(\nu_{j}^i)_{j\in M_{n_h}(i)}$, $\nu_j^i\geq 0$ such that
\begin{equation}\label{ec-b2}
 \sum_{j\in M_{n_h}(i)}\nu_j^i=1
\end{equation}
and also
\begin{equation}\label{ec-b3}
\sum_{j\in M_{n_h}(i)}|\nu_{j}^i|^p\rho_j(s_{i, n_h})<\delta.
\end{equation}
Define the function $g\in L^{p}_{\rho}(L(G))$ such that
$$g_i(s):=\left\{\begin{array}{@{}l@{}}
(f_1)_i(s)\hspace{4em} \text{ if }\hspace{0.1em}i\in I_1;\\
(f_2)_l(s)\nu_{i}^l \hspace{2em} \text{ if } i \in M_{n_h}(l)\text{ for some }l\in I_2;\\
0\hspace{6.1em}\text{ otherwise.}
\end{array}\right.\,$$
Since $n_h>N_{0}$ we have by \eqref{ec-b2} that if $i\in I_2$ then
$$(T_{n_h}g)_i(s)=\sum_{j\in M_{n_h}(i)}(f_2)_i(s)\nu_j^i=(f_2)_i(s).$$
On the other hand, if $i\notin I_2$ then $(T_{n_h}g)_i=0$, and consequently $T_{n_h}g=f_2$.

Now we prove that $\|f_1-g\|_{p,\rho}<\epsilon$. 
By equation \eqref{ec-b3} we have:
\begin{align*}
&\|f_1-g\|_{p, \rho}^p=\sum_{i\in I }\int_{0}^1|(f_1)_i(s)-g_i(s)|^p\rho_i(s)ds\\
&=\sum_{l\in I_2}\sum_{j\in M_{n_h}(l)}\int_{0}^1|(f_2)_l(s)\nu_j^l|^p\rho_j(s)ds\\
&=\sum_{l\in I_2}\int_{0}^{1}|(f_2)_l(s)|^p\sum_{j\in M_{n_h}(l)}|\nu_{j}^l|^p\rho_j(s)ds\\
&\leq \sum_{l\in I_2}\int_{0}^{1}|(f_2)_l(s)|^p\sum_{j\in M_{n_h}(l)}|\nu_{j}^l|^p\frac{C_2}{C_1}\rho_j(s_{l, n_h})ds\leq \frac{C_2}{C_1}\delta\sum_{l\in I_2}\int_{0}^1|(f_2)_l(s)|^pds\\
&\leq \frac{C_2}{C_1^2}\delta\sum_{l\in I_2}\int_{0}^1|(f_2)_ l(s)|^p\rho_l(s)ds= \frac{C_2}{C_1^2}\delta \|f_2\|^p_{p,\rho}.
\end{align*}
We obtain the assertion by choosing $\delta=\frac{C_1^2}{C_2}{\|f_2\|_{p,\rho}}^{-p}\epsilon^p$.

\hspace{1em}We will now prove that $(ii)\Rightarrow (iii)$. Assume that the translation semigroup is weakly mixing but $\rho$ does not satisfy condition $(iii)$. By Theorem \ref{weaklymixing-mixingdiscretization} every autonomous discretization is weakly mixing, which holds in particular for the sequence $(T_n)_{n\in\N}$. As a consequence, the sequence $(T_n)_n$ is hereditarily transitive (see e.g. \cite[Exercise 1.6.4]{Alfred}), namely there exists an increasing sequence of natural numbers $(n_k)_{k\in\N}$ such that $(T_{m_k})_{k\in\N}$ is transitive for every subsequence $(m_k)_{k\in\N}$ of $(n_k)_{k\in\N}$.
 
Assume that there exists some $i_0\in I $
such that 
$$\lim_{k\rightarrow \infty}\inf_{s\in [0, 1)}\left(\sum_{j\in M_{n_k}(i_0)}\frac{1}{\rho_j(s)^{1/(p-1)}}\right)^{-1}\neq 0.$$

Then there exists a subsequence $(n_{k( \xi)})_{\xi\in\N}$ and $\varepsilon_0>0$ such that
$$\inf_{s\in[0, 1)}\left(\sum_{j\in M_{n_{k( \xi)}}(i_0)}\frac{1}{\rho_j(s)^{1/(p-1)}}\right)^{1-p}>\varepsilon_0.$$
Taking Remark \ref{est} into account, we get that  there exists some $K\geq 0$ such that
 \begin{equation}\label{ec-b5} \rho_{i_0}(s)\leq K\left(\sum_{j\in M_{n_{k(\xi)}}(i_0)}\frac{1}{\rho_j(s)^{1/(p-1)}}\right)^{1-p}
\end{equation}
for all $\xi\in\N$ and $s\in[0, 1)$.
Now, consider $g\in L_{\rho}^p(L(G))$ defined as
$$g_i(s):=\left\{\begin{array}{@{}l@{}}
 \frac{2^{1-1/p}}{\rho_{i_0}(s)^{1/p}}\hspace{2em} \text{ if }\hspace{0.1em}i=i_0; \\
0 \hspace{4.6em} \text{ otherwise. }
 \end{array}\right.\,$$
By equation \eqref{ec-b5} and Hölder's inequality, for any $f\in L_{\rho}^p(L(G))$ such that $\|f\|_{p,\rho}^p<\frac{1}{2K}$ and any $\xi\in\N$ we obtain:
\begin{align*}
&\|T_{n_{k(\xi)}}f-g\|_{p,\rho}^p= \sum_{i\in I }\int_{0}^{1}|(\mathcal{A}^{n_{k( \xi)}}f)_i(s)-g_i(s)|^p\rho_i(s)ds\\
&\geq \int_{0}^{1}|(\mathcal{A}^{{n_{k(\xi)}}}f)_{i_0}(s)-g_{i_0}(s)|^p\rho_{i_0}(s)ds\geq \int_{0}^{1}\left(2^{1-p}|g_{i_0}(s)|^p-|(\mathcal{A}^{{n_{k(\xi)}}}f)_{i_0}(s)|^p\right)\rho_{i_0}(s)ds\\
&=1-\int_{0}^{1}|(\mathcal{A}^{{n_{k(\xi)}}}f)_{i_0}(s)|^p\rho_{i_0}(s)ds=1-\int_{0}^{1}\left|\sum_{j\in M_{{n_{k(\xi)}}}(i_0)}f_j(s)\right|^p\rho_{i_0}(s)ds \\
&\geq 1-\int_{0}^{1}\left(\sum_{j\in M_{{n_{k(\xi)}}}(i_0)}|f_j(s)|^p\rho_j(s)\right)\left(\sum_{j\in M_{{n_{k(\xi)}}}(i_0)}\frac{1}{\rho_j(s)^{p*/p}}\right)^{p-1}\rho_{i_0}(s)ds\\
&\geq 1-K\int_{0}^{1}\sum_{j\in M_{{n_{k(\xi)}}}(i_0)}|f_j(s)|^p\rho_j(s)ds\\
&\geq 1-K\|f\|_{p,\rho}^p\geq 1/2.
\end{align*}
\hspace{1em}Hence, the sequence of operators $(T_{n_{k(\xi)}})_{\xi\in\N}$ is not transitive, which is a contradiction. Therefore, if the translation semigroup is weakly mixing,  condition $(iii)$ must hold.
\end{proof}

\begin{remark}
    A slight modification of the  proof of Theorem \ref{weakly mixing translation} allow to show  that, either on $L_{\rho}^1(L(G))$ or on $L_{\rho}^p(L(G))$, the translation semigroup is mixing if and only if condition $(iii)$ on the weight of the space holds for the full sequence $(n
    )_{n\in\mathds{N}}$.
\end{remark}

\hspace{1em} We will now characterize hypercyclicity for the unrooted case. In order to do so, given a directed metric tree $L(G)$, where $G=(V, E)$ is a directed tree, and $E=(e_i)_{i\in I}$, we define $K_n(j)\in I$ as the index of the unique edge such that $j\in M_n(K_n(j))$.  The edge $e_{K_n(j)}$ can be considered as an ancestor of $e_j$. Moreover,  we define the set
$$G(j)=\bigcup_{n=1}^{\infty}M_n(K_n(j)),$$
and observe that if $j_1\in G(j)$ then $G(j_1)=G(j)$.

Furthermore, we say that $j_1\sim_G j_2$ if $j_1\in G(j_2)$, so that $\sim_G$ defines an equivalence relation on $E$.

\begin{theorem}\label{TH3}
    Let $G$ be an \textit{unrooted} directed tree  without leaves and let $L(G)$ be the directed metric tree associated with $G$.
    
    \begin{enumerate}[(a)]
        \item The following assertions are equivalent: 
        \begin{enumerate}[(i)]
            \item the left translation semigroup is hypercyclic on $L_\rho^1(L(G))$;
            \item the left translation semigroup is weakly mixing on $L_{\rho}^1(L(G))$;
            \item there exists  an  increasing sequence $(n_{h})_{h}\in\N^{\N}$ such that for all $i\in I $
            \begin{align*}
&\lim_{h\rightarrow\infty}\inf_{s\in[0, 1)}\inf_{j\in M_{n_h}(i)}\rho_j(s)=0,\\ &\hspace{3em}\text{and }\\
&\lim_{h\rightarrow\infty}\inf_{s\in[0, 1)}\min\left\{\rho_{K_{n_{h}}(i)}(s),\inf_{j\in M_{n_h}(K_{n_h}(i))}\rho_j(s)\right\}=0.
            \end{align*}
        \end{enumerate}
       \item For $1<p<\infty$, the following assertions are equivalent: 
        \begin{enumerate}[(i)]
            \item the left translation semigroup is hypercyclic on $L_\rho^p(L(G))$;
            \item the left  translation semigroup is weakly mixing on $L_\rho^p(L(G))$;
            \item there exists an increasing sequence  $(n_h)_{h}\in\N^{\N}$ such that for all $i\in I $
            \begin{align*}
    &\lim_{h\to\infty}\sup_{s\in[0, 1)}\left(\sum_{j\in M_{n_h}(i)}\frac{1}{\rho_j(s)^{1/(p-1)}}\right)=\infty
    \\ &\hspace{4em}\text{and }\notag \\
    &\lim_{h\to\infty}\sup_{s\in [0,1)}\left(\frac{1}{\rho_{K_{n_h}(i)}(s)^{1/(p-1)}}+ \sum_{j\in M_{n_h}(K_{n_h}(i))}\frac{1}{\rho_j(s)^{1/(p-1)}}\right)=\infty
            \end{align*}
        \end{enumerate}
    \end{enumerate}

\end{theorem}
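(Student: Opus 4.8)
The plan is to follow the architecture of the proof of Theorem~\ref{weakly mixing translation}, proving the cycle (iii)$\Rightarrow$(ii), (ii)$\Rightarrow$(i), (ii)$\Rightarrow$(iii), with the equivalence (i)$\Leftrightarrow$(ii) obtained as in the rooted case, so that it suffices to establish (iii)$\Leftrightarrow$(ii). As there, I would treat only (b), the case (a) being the $p=1$ specialization in which the $1/(p-1)$-sums are replaced by infima. The decisive new feature relative to the rooted setting is that for $f_1\in F^p_\rho$ the backward image $T_n f_1$ \emph{no longer vanishes} for large $n$: since $G$ is unrooted, every edge of $\supp f_1$ has an ancestor $K_n(j)$ at every depth, and $(T_nf_1)_{K_n(j)}=\sum_{j'\in M_n(K_n(j))\cap\supp f_1}(f_1)_{j'}$ is generically nonzero. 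The two clauses of (iii) are exactly what is needed to control, respectively, the creation of $f_2$ downstream (descendants) and the disposal of this persistent backward image upstream (ancestors); all indices involved remain inside the $\sim_G$-class $G(i)$.

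For sufficiency I would fix $f_1,f_2\in F^p_\rho$ and $\eps>0$ and, using the common sequence $(n_h)$ from (iii), choose $h$ so large that $\supp f_1$, $\supp f_2$, the cohorts $M_{n_h}(l)$ ($l\in\supp f_2$) and the ancestor edges $K_{n_h}(j)$ ($j\in\supp f_1$) are pairwise disjoint. As in the rooted proof, the $\sup_s$ in (iii) only supplies a single good point at level $n_h$, but the admissibility bound \eqref{1<p} (cf. Remark~\ref{est}) propagates it to an all-$s$ estimate one level down, where the data are actually placed. I would then build $g$ in three superposed pieces: a copy of $f_1$ on $\supp f_1$ for the approximation $\|g-f_1\|<\eps$; for each $l\in\supp f_2$ a distribution of $(f_2)_l$ over $M_{n_h}(l)$ produced by Lemma~\ref{lema sucesiones}, whose cost is controlled by $(\sum_{j\in M_{n_h}(l)}\rho_j^{-1/(p-1)})^{1-p}$ and made small by the first clause, forcing $(T_{n_h}g)_l=(f_2)_l$; and, for each ancestor $a=K_{n_h}(j)$, a correction that optimally splits $(T_{n_h}f_1)_a$ between a residual left at $a$ (cheap when $\rho_a$ is small) and a redistribution over the cohort $M_{n_h}(a)$ (cheap when the cohort can absorb it), so that $(T_{n_h}g)_a\approx0$ while $g$ stays close to $f_1$. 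The key point is that the minimal joint cost of this split, by Lemma~\ref{lema sucesiones} applied to the \emph{enlarged} generator family $\{a\}\cup M_{n_h}(a)$, is governed precisely by
$$\Bigl(\rho_{K_{n_h}(i)}(s)^{-1/(p-1)}+\sum_{j\in M_{n_h}(K_{n_h}(i))}\rho_j(s)^{-1/(p-1)}\Bigr)^{1-p},$$
which the second clause drives to $0$. Running this construction for a pair of data with the same $n_h$ shows $T_{n_h}\oplus T_{n_h}$ is transitive, i.e.\ weak mixing.

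For necessity I would pass, via Theorem~\ref{weaklymixing-mixingdiscretization}, from weak mixing to hereditary transitivity of $(T_n)_n$, fixing one increasing sequence $(n_k)$ along which every subsequence of $(T_{n_k})$ is transitive; it then suffices to verify both clauses along $(n_k)$ for every $i$. If the first clause failed at some $i_0$ I would reproduce the rooted obstruction verbatim: the target $g$ with $g_{i_0}=2^{1-1/p}\rho_{i_0}^{-1/p}$ cannot be approximated by $T_{n_{k(\xi)}}f$ for small $f$, since by \eqref{1<p}, \eqref{int} and H\"older the mass $\int_0^1|(T_nf)_{i_0}|^p\rho_{i_0}\,ds$ is dominated by $K\|f\|^p$. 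If instead the second clause failed at $i_0$, I would use the dual, \emph{source-side} obstruction: fix $u$ supported on $e_{i_0}$ with $\int_0^1|u_{i_0}|^p\rho_{i_0}\,ds=2$ and take the target ball around $0$; then $T_{n_{k(\xi)}}u'$ cannot be made small for any $u'$ close to $u$, because emptying the mass at $i_0$—either by leaving a residual at the ancestor $K_{n_{k(\xi)}}(i_0)$ (weighted by $\rho_{K_{n_{k(\xi)}}(i_0)}$) or by redistributing it over the cohort—has joint cost bounded below exactly by the reciprocal of the combined quantity above, which stays away from $0$ when the second clause fails.

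I expect the main obstacle to be this third construction piece together with its dual obstruction: recognizing that the non-vanishing backward image is governed by a single min-norm problem over the enlarged generator set $\{K_n(i)\}\cup M_n(K_n(i))$, whose optimal value is the combined weight $\rho_{K_n(i)}^{-1/(p-1)}+\sum_{j\in M_n(K_n(i))}\rho_j^{-1/(p-1)}$, is what forces the unusual shape of the second clause and ties the two conditions together. Secondary technical points are the disjointness of the downstream and upstream regions for large $n_h$ within each $\sim_G$-class, and checking that the single hereditary sequence furnished by weak mixing simultaneously serves both clauses and all edges $i$.
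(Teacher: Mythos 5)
Your sufficiency construction is essentially the paper's: there too $g$ is built from a copy of $f_1$, a Lemma~\ref{lema sucesiones} distribution of $(f_2)_l$ over $M_{n_h}(l)$, and an ancestor correction. Where you propose a single optimal split over the enlarged family $\{K_{n_h}(i)\}\cup M_{n_h}(K_{n_h}(i))$, the paper splits $I_1$ into $J_1$ (ancestor weight already small, so the residual backward image is simply left in place as a small error in $\|T_{n_h}g-f_2\|$) and $J_2\setminus J_1$ (cohort sum large, residual cancelled by redistribution); this is the same idea up to a factor of $2$. Your two necessity obstructions likewise correspond to the paper's Claims 1 and 2 in contrapositive form, and the pointwise dichotomy needed to control the cross term in the source-side obstruction (either $\rho_{K_n(i_0)}(s)$ is small or the mass $\sum_{j\ne i_0}|u'_j(s)|$ must exceed $1/2$, which via Lemma~\ref{lema sucesiones} forces the cohort sum to be large) is exactly how the paper runs Claim~2.

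The genuine gap is the first line of your plan: ``(i)$\Leftrightarrow$(ii) obtained as in the rooted case.'' In the rooted case that equivalence rests on $F^p_\rho$ being a dense set of vectors with \emph{bounded} orbits, which holds there because $T_nf=0$ for all large $n$. In the unrooted case --- as you yourself identify as the ``decisive new feature'' --- $T_nf$ is supported on the ancestor edges, with $\|T_nf\|^p=\sum_{j}\int_0^1|\sum_{j'\in M_n(K_n(j))\cap\,\supp f}f_{j'}|^p\rho_{K_n(j)}\,ds$, and nothing in admissibility or in condition (iii) forces $\rho_{K_n(j)}$ to stay bounded: the second clause only requires the \emph{combined} quantity $\rho_{K_{n_h}(i)}^{-1/(p-1)}+\sum_{j}\rho_j^{-1/(p-1)}$ to blow up, which is perfectly compatible with $\rho_{K_{n_h}(i)}\to\infty$ as long as the cohort sum explodes. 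So the bounded-orbit criterion is inapplicable, (i)$\Rightarrow$(ii) is not available a priori, and since your necessity argument starts from weak mixing (hereditary transitivity of $(T_n)_n$ via Theorem~\ref{weaklymixing-mixingdiscretization}), your cycle never derives (iii) from hypercyclicity alone: the equivalence is not closed. The paper closes it by proving (i)$\Rightarrow$(iii) directly: by Theorem~\ref{hypercyclic-autonomous} the single operator $T_1$ is hypercyclic, one applies its transitivity to the pair of indicator functions $f_H$ and $f_F$ (with $H$ a set of $\sim_G$-representatives of the finite set $F$ --- concentrating the source on one edge per cohort is what makes the Claim~2 dichotomy work, since $(M_n(K_n(i))\setminus\{i\})\cap H=\emptyset$), extracts the two lower bounds for one $n\geq N$, and then diagonalizes over an exhaustion of $I$ by finite sets and $\epsilon_h\downarrow 0$ to produce the single increasing sequence $(n_h)$. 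You would need to replace the appeal to (i)$\Leftrightarrow$(ii) by an argument of this kind.
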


\begin{proof}
    Since the proofs for both cases are similar, we only present the proof of  case $(b)$. We turn first our attention to the implication  $(i) \Rightarrow (iii)$. 

We will prove that given a finite subset $F\subseteq I$,  for every $\epsilon >0$  and  every $N\in\N$ there exists $n\geq N$ such that for every $i\in F$:

\begin{align*}
            &\sup_{s\in[0, 1)}\left(\sum_{j\in M_{n(i)}}\frac{1}{\rho_j(s)^{1/(p-1)}}\right)>\frac{1}{\varepsilon},
            \\
            &\sup_{s\in [0,1)}\left(\frac{1}{\rho_{K_{n}(i)}(s)^{\frac{1}{p-1}}}
            + \sum_{j\in M_{n}(K_{n}(i))}\frac{1}{\rho_j(s)^{1/(p-1)}}\right)>\frac{1}{\epsilon}.
            \end{align*}
            
The assertion will follow by taking an increasing  sequence $(F_h)_{h\in\N}$ of finite subsets of $I$ such that $\bigcup_h F_h=I$ and  a sequence $(\epsilon_h)_{h\in\N}$ of positive numbers tending to $0$. Then, we can construct an increasing sequence $(n_h)_{h\in\mathds N}$ of natural numbers such that 

\begin{align*}
  \forall h\in\N\  \forall i\in F_h\qquad       &\sup_{s\in[0, 1)}\left(\sum_{j\in M_{n_h}(i)}\frac{1}{\rho_j(s)^{1/(p-1)}}\right)>\frac{1} {\epsilon_h},\\ &\sup_{s\in [0,1)}\left(\frac{1}{\rho_{K_{n_h}(i)}(s)^{1/(p-1)}}+ \sum_{j\in M_{n_h}(K_{n_h}(i))}\frac{1}{\rho_j(s)^{1/(p-1)}}\right)>\frac{1}{\epsilon_h}.
            \end{align*}
If $i\in I$, then $i\in F_h$ for any $h\geq h_0$ for some $h_0\in \N$, and therefore 
\begin{align*}\forall h\in\N, h\geq h_0:\            &\sup_{s\in[0, 1)}\left(\sum_{j\in M_{n_h}(i)}\frac{1}{\rho_j(s)^{1/(p-1)}}\right)>\frac{1}{\varepsilon_h},\\ &\sup_{s\in [0,1)}\left(\frac{1}{\rho_{K_{n_h}(i)}(s)^{1/(p-1)}}+ \sum_{j\in M_{n_h}(K_{n_h}(i))}\frac{1}{\rho_j(s)^{1/(p-1)}}\right)>\frac{1}{\varepsilon_h}
            \end{align*}
and the assertion follows.

So, consider a  finite subset $F\subset I$ and a subset $H\subset F$ such that for all $i\in F$ there is some $i_1\in H$ such that  $i_1\sim_G i $  and if $i_1, i _2\in H$ then $i_1\not\sim_G i _2$ (namely $H$ is a set of representatives with respect to the equivalence relation $\sim_G$ in $F$). Define the functions 
$$(f_F)_i(s):=\left\{\begin{array}{ll}
     &1 \hspace{2em}\text{ if }i\in F,\\
     &0 \hspace{2em} \text{ otherwise}
\end{array}\right.\quad \text{ for all }s\in[0, 1),$$
$$(f_H)_i(s):=\left\{\begin{array}{ll}
     &1 \hspace{2em}\text{ if }i\in H,\\
     &0 \hspace{2em} \text{ otherwise} 
\end{array}\right. \quad \text{ for all }s\in[0, 1).$$
Since the semigroup $\{T_t\}_{t \geq 0}$ is hypercyclic, by  Theorem \ref{hypercyclic-autonomous}, it follows that the operator $T_1$ is also hypercyclic. Hence, 
given $\varepsilon\in ]0, 4^{\frac 1 p}\min_{i\in F}\inf\rho_i[$ and $N\in\mathds N$,  there exist  $f\in L_{\rho}^p(L(G))$ and $n\geq N$ such that
\begin{align}\label{ec-3.3-1}
    &\|f-f_H\|_{p,\rho}<\frac{\epsilon}{4^{1/p}}, 
    &\|T_{n}f-f_F\|_{p,\rho}<\frac{\epsilon}{4^{1/p}},
\end{align}
and we can choose $n$ large enough so that 
\begin{equation}\label{ec-3.3-3}
    H\cap M_n(F)=\emptyset, \qquad K_n(H)\cap F=\emptyset
\end{equation}

and 
\begin{equation}\label{ec-3.3-3a}
i,j\in F,\ i\sim_G j\ \Rightarrow\ K_n(i)=K_n(j).
\end{equation}

\emph{Claim 1}: for all $i\in F$
\qquad 
$$\sup_{s\in[0, 1)}\left(\sum_{j\in M_{n}(i)}\frac{1}{\rho_j(s)^{1/(p-1)}}\right)\geq\left(\frac{4 }{\varepsilon^p\sup \rho_i}\right)^\frac{1}{p-1} \left( (\inf \rho_i)^{\frac 1 p} -\frac{\epsilon}{4^{\frac 1 p}}\right)^\frac{p-1}{p}$$ 


Indeed, by  \eqref{ec-3.3-1} and \eqref{ec-3.3-3},   for each $i\in F$ 
$$\sum_{j\in M_{n}(i)}\int_0^1|f_j(s)|^p\rho_j(s)ds<\frac{\epsilon^p}{4},$$
and 
\begin{align*} &\left(\int_0^1 |(T_nf)_i(s)-1|^p\rho_i(s)ds\right)^{\frac 1 p} \leq \|T_{n}f-f_F\|_{p,\rho}<\frac{\epsilon}{4^{1/p}}\end{align*}
and therefore 
$$\left(\int_{0}^1|(T_nf)_i(s)|^p\rho_i(s)ds\right)^\frac 1 p>\left(\int_0^1\rho_i(s)ds\right)^\frac 1 p-\frac{\epsilon}{4^{1/p}} \geq (\inf\rho_i)^\frac 1 p - \frac{\epsilon}{4^{\frac 1 p}}.$$
Hence, we get :
\begin{align*}
    &\left(\inf\rho_i\right)^\frac 1 p - \frac{\epsilon}{4^{\frac 1 p}}<\\
    &\left(\int_{0}^1|(T_nf)_i(s)|^p\rho_i(s)ds\right)^{\frac 1 p}=\left(\int_{0}^1\left|\sum_{j\in M_{n(i)}}f_j(s)\right|^p\rho_i(s)ds\right)^{\frac 1 p}\\
    &\leq \left(\int_{0}^1\left(\sum_{j\in M_{n(i)}}|f_j(s)|^p\rho_j(s)\right)\left(\sum_{j\in M_{n(i)}}\frac{1}{\rho_j(s)^{1/(p-1)}}\right)^{p-1}\rho_i(s)ds\right)^\frac 1 p\\
    &\leq \left(\sup\rho_i\right)^\frac 1 p \cdot \sup_{s\in[0, 1)}\left(\sum_{j\in M_{n}(i)}\frac{1}{\rho_j(s)^{1/(p-1)}}\right)^\frac{p-1}{p}\left(\sum_{j\in M_{n}(i)}\int_{0}^1|f_j(s)|^p\rho_j(s)ds\right)^\frac 1 p\\
    &\leq \frac{\epsilon}{4^{1/p}} \left(\sup\rho_i\right)^\frac 1 p\cdot \sup_{s\in[0, 1)}\left(\sum_{j\in M_{n}(i)}\frac{1}{\rho_j(s)^{1/(p-1)}}\right)^\frac{p-1}{p}.
\end{align*}

Consequently: 

$$\sup_{s\in[0, 1)}\left(\sum_{j\in M_{n}(i)}\frac{1}{\rho_j(s)^{1/(p-1)}}\right)\geq\left(\frac{4 }{\epsilon^p \sup \rho_i}\right)^\frac{1}{p-1} \left( (\inf \rho_i)^{\frac 1 p} -\frac{\epsilon}{4^{\frac 1 p}}\right)^\frac{p-1}{p} $$

\emph{Claim 2:} For each $i\in F$ 

$$\sup_{s \in [0, 1)}\left(\frac{1}{\rho_{K_{n}(i)}(s)^{1/(p-1)}}+\sum_{j\in M_{n}(K_{n}(i))} \frac{1}{\rho_j(s)^{1/(p-1)}}\right)\geq  \frac{1}{(2\epsilon)^{p/(p-1)}}$$

Observe that, by the definition of $H$ and \eqref{ec-3.3-3a}, it is enough to prove Claim 2 for any $i\in H$. To this aim,  fix $i\in H$  and consider the function $g$ defined by 
$$g_j(s)=\left\{\begin{array}{ll}
 &f_j(s)-\delta_{ij}\hspace{2em}\text{if }\hspace{0.2em}j\in M_n(K_n(i))\\
     &0\hspace{3.5em}\text{ otherwise}
\end{array}\right., \qquad s\in [0,1). $$
 By  \eqref{ec-3.3-1}
we have 
\begin{align*}
    &\sum_{j\in M_{n}(K_{n}(i))}\int_{0}^1|g_j(s)|^p\rho_j(s)ds \\=&\int_0^1|f_i(s)-1|^p\rho_i(s)ds+  \sum_{j\in M_{n}(K_{n}(i))\setminus\{i\}}\int_{0}^1|f_j(s)|^p\rho_j(s)ds \notag\\
    \leq& ||f-f_H||^p<\frac{\epsilon^p}{4}\notag, 
\end{align*}
by observing that $\left(M_{n}(K_{n}(i))\setminus\{i\}\right)\cap H=\emptyset$. 
Therefore
\begin{align*}
    &\lambda\left(\left\{s\in[0, 1) \, : \, \sum_{j\in M_{n}(K_{n}(i))}|g_j(s)|^p\rho_j(s)\geq \epsilon^p\right\}\right)\\
    &\leq\frac{1}{\epsilon^p}\int_{0}^1\sum_{j\in M_{n}(K_{n}(i))}|g_j(s)|^p\rho_j(s)ds<\frac{1}{4}.
\end{align*}
Hence there exists some set $E_1\subseteq [0,1)$ with $\lambda(E_1)>3/4$ such that for all $s\in E_1$
$$\sum_{j\in M_{n}(K_{n}(i))}|g_j(s)|^p\rho_j(s)<\epsilon^p.$$

By  \eqref{ec-3.3-1} and \eqref{ec-3.3-3}, for each  $i\in H$ we have 
\begin{align*}
    &\int_{0}^1|(T_{n}f)_{K_{n}(i)}|^p\rho_{K_{n}(i)}(s)ds=\int_0^1 \left|\sum_{j\in M_{n}(K_n(i))}f_j(s)\right|^p\rho_{K_{n}(i)}(s)ds\\
=&\int_{0}^1\left|\left(1+\sum_{j\in M_{n}(K_{n}(i))}g_j(s)\right)\right|^p\rho_{K_{n}(i)}(s)ds<\frac{\epsilon^p}{4},
\end{align*}
and, again we  conclude that there exists a set $E_2\subseteq [0,1)$ such that $\lambda(E_2)>3/4$ and for all $s\in E_2$
\begin{align*}
    \rho_{K_{n}(i)}(s)\left|\left(1+\sum_{j\in M_{n}(K_{n}(i))}g_j(s)\right)\right|^pds \leq \epsilon^p.
\end{align*}
Define now $E:=E_1\cap E_2$. Since $\lambda(E_1), \lambda(E_2)>3/4$, it is clear that $\lambda(E_1\cap E_2)>0$. Let $s_0\in E$. 
If $\rho_{K_n(i)}(s_0)\leq (2\epsilon)^p$, then Claim 2 is satisfied. If $ \rho_{K_n(i)}(s_0)\geq  (2\epsilon)^p$, then 
$$\left\vert 1+\sum_{j\in M_{n}(K_{n}(i))}g_j(s_0)\right|^p \leq \frac{1}{2^p}.$$
This yields that 
$$\sum_{j\in M_{n}(K_{n}(i))}|g_j(s_0)|\geq
\left\vert\sum_{j\in M_{n}(K_{n}(i))}g_j(s_0)\right\vert \geq \frac 1 2.$$
For any $j\in M_{n}(K_{n}(i))$,  consider the functions 
$$v_j(s_0)=\frac{g_j(s_0)}{\sum_{j\in M_{n}(K_{n}(i))}|g_j(s_0)|}\chi_{E\cap\{\rho_{K_n(i)}\geq (2\epsilon)^p\}}, \qquad s_0\in [0,1).$$
Every $v_j$ is measurable on $E$ and $\sum_{j\in M_{n}(K_{n}(i))}|v_j(s_0)|=1$ for any $s\in E\cap\{\rho_{K_n(i)}\geq (2\epsilon)^p\}$. Therefore, by Lemma \ref{lema sucesiones},
$$\left(\sum_{j\in M_{n}(K_{n}(i))} \frac{1}{\rho_j(s_0)^{1/(p-1)}}\right)^{1-p} \leq \sum_{j\in M_{n}(K_{n}(i))} |v_j(s_0)|^p\rho_j(s_0) <(2\varepsilon)^p$$
for any $s_0\in {E\cap\{\rho_{K_n(i)}\geq (2\epsilon)^p\}}.$

Hence for any $s_0\in E$

$$\sum_{j\in M_{n}(K_{n}(i))} \frac{1}{\rho_j(s_0)^{1/(p-1)}}\geq  \frac{1}{(2\epsilon)^{p/(p-1)}},$$
and Claim 2 follows.

To prove the implication $(iii)\Rightarrow (i)$, it is enough to show that, given $f_1, f_2\in F^p_{\rho}$ and $\epsilon >0$, there is some $g\in L_{\rho}^p(L(G))$ and $\overline t>0$ such that 
  $$\|f_1-g\|_{p,\rho}<\epsilon\quad\text{ and }\quad \|T_{\overline{t}}g-f_2\|_{p,\rho}<\epsilon.$$
 
 Without loss of generality, we can assume that, for $k=1,2$,  if  $(f_k)_i\neq  0$ for some $i\in I$,  then  $(f_k)_h=0$ for all $h\sim_G i,\ h\neq i$.
 Indeed,  any $f\in  F^p_{\rho}$ is a linear combination of functions satisfying this condition.
  
 Define $I_1:=\{i\in I : (f_1)_i\neq  0\}$, $I_2:=\{i\in I : (f_2)_i\neq  0\}$ and the constants: 
\begin{align*}&C_1:=\min_{i\in I_{1}\cup I_2}\operatorname*{\inf}_{s\in [0, 1)}\rho_i(s)>0 &C_2:=\max_{i\in I_{1}\cup I_2}\operatorname*{\sup}_{s\in [0, 1)}\rho_i(s)>0
\end{align*}

Clearly, there is some $N_0\in\N$ such that for all $n\geq N_0$
\begin{equation}\label{eqq}\left(\bigcup_{i\in I_2}M_n(i)\right)\cap I_1=\emptyset, \qquad 
\left(\bigcup_{i\in I_1}K_n(i)\right)\cap I_2=\emptyset\end{equation}
for all $n\geq N_0.$

Given fixed $\delta_1, \delta_2>0$,  by $(iii)$  there exists  $n_h\in\N$, $n_{h}>N_0$,  such that 
for all $i\in I_1$ there is  $s_{i, n_h}\in [0,1)$  for which
    \begin{align*}
        &\frac{1}{\rho_{K_{n_h}(i)}(s_{i, n_h})}+        \left(\sum_{j\in M_{n_h}(K_{n_h}(i))}\frac{1}{\rho_j(s_{i, n_h})^{1/(p-1)}}\right)>\frac{1}{ \delta_1}\end{align*}
and,  for every $i\in I_2$, there exists $t_{i, n_h} \in [0,1)$ such that
    \begin{align*}&\left(\sum_{j\in M_{n_h}(i)}\frac{1}{\rho_j(t_{i, n_h})^{1/(p-1)}}\right)>\frac{1}{\delta_2}.
\end{align*}
        Then, for every $i\in I_1$
$$\frac{1}{\rho_{K_{n_{h}}(i)}(s_{i, n_h})} >\frac{1}{2\delta_1} \qquad \text{or}\ 
        \left(\sum_{j\in M_{n_{h}}(K_{n_{h}}(i))}\frac{1}{\rho_j(s_{i, n_h})^{1/(p-1)}}\right)>\frac{1}{ 2\delta_1},$$
hence we can write $I_1=J_1\cup J_2$ where 
\begin{align*}
    &J_1:=\left\{i\in I_1 : \frac{1}{\rho_{K_{n_{h}}(i)}(s_{i, n_h})} >\frac{1}{2\delta_1}\right\},\\
    &J_2:=\left\{i\in I_1 : \left(\sum_{j\in M_{n_{h}}(K_{n_{h}}(i))}\frac{1}{\rho_j(s_{i, n_h})^{1/(p-1)}}\right)>\frac{1}{ 2\delta_1}\right\}.
\end{align*}
By Lemma \ref{lema sucesiones},  for each $i\in  J_2\setminus J_1$ there exists a family of numbers   $(u_{j}^i)_{j\in M_{n_h}(K_{n_{h}}(i))}$, $u^i_{j}\geq 0$
such that 
\begin{align*}
    \sum_{j\in M_{n_{h}}(K_{n_{h}}(i))}u_j^i=1
    \qquad \sum_{j\in M_{n_{h}}(K_{n_{h}}(i))}|u_j^i|^p\rho_j(s_{i,n_h})<(2\delta_1)^{p-1}
    ,
\end{align*}
and for every $i\in I_2$ there exists $(\nu^i_{j})_{j\in M_{n_h}(i)}$, $\nu_{j}^i\geq0$ such that    
\begin{align*}
    \sum_{j\in M_{n_{h}}(i)}\nu^i_j=1,\qquad  
    &\sum_{j\in M_{n_{h}}(i)}|\nu^i_{j}|^p\rho_j(t_{i, n_h})<\delta_2^{p-1}.
\end{align*}
Now, define the function $g\in L^{p}_{\rho}(L(G))$ such that 
$$g_i(s):=\begin{cases}
    (f_1)_i(s)\hspace{6em} &\text{ if }\hspace{0.1em}i\in {J_1};\\
    (f_1)_i(s)(1-u_i^i) &\text{ if }i\in  J _2\setminus J_1;\\
    -(f_1)_l(s)u_i^l\hspace{3.1em} &\text{ if } i\in M_{n_h}(K_{n_h}(l)) \text{ for some }l\in  J_2\setminus J_1 \text{ and } i\notin I_1,;\\
    (f_2)_l(s)\nu^l_{i} \hspace{4em} &\text{ if  }i\in M_{n_{h}}(l)\text{ for some }l\in I_2;\\
    0\hspace{8.8em}&\text{ otherwise.}
  \end{cases}$$
If $l\in  J_1$ and $j\in M_{n_{h}}(K_{n_{h}}(l))$, then, by \eqref{eqq} and \eqref{int}, $j\notin M_{n_{h}}(I_2)$   and 
$j\notin  M_{n_{h}}( K_{n_h}(J_2\setminus J_1))$, 
so  
\begin{equation}\label{ec 3.3-14}
    (T_{n_{h}}g)_{K_{n_{h}}(l)}(s)=\sum_{j\in M_{n_{h}}(K_{n_{h}}(l))}g_j(s)=(f_1)_l(s).
\end{equation}
If $l\in J_2\setminus J_1$, then  for every $j\in M_{n_{h}}(K_{n_{h}}(l))$ it holds that $j\notin M_{n_{h}}(I_2)$ and 
\begin{equation}\label{ec 3.3-15}
    (T_{n_{h}}g)_{K_{n_{h}}(l)}(s)=\sum_{j\in M_{n_{h}}(K_{n_{h}}(l))}g_j(s)=(f_1)_l(s)-\sum_{j\in M_{n_{h}}(K_{n_{h}}(l))}(f_1)_l(s)u_j^l=0.
\end{equation}
If $l\in M_{n_{h}}(I_2)$ then 
\begin{equation}\label{ec 3.3-16}
    (T_{n_{h}}g)_{K_{n_{h}}(l)}(s)=\sum_{j\in M_{n_{h}}(K_{n_{h}}(l))}(f_2)_{K_{n_{h}}(l)}(s)\nu_j^{K_{n_h}(l)}=(f_2)_{K_{n_{h}}(l) }(s).
\end{equation}
Finally, if $i\in I\setminus\Big( K_{n_{h}}( J_1) \cup K_{n_{h}}(J_2 \setminus J_1) \cup K_{n_{h}}(M_{n_{h}}(I_2))\Big) =I\setminus K_{n_{h}}(I_1\cup M_{n_{h}}(I_2))$, then $M_{n_{h}}(i)$ has  empty intersection with the support of $g$, hence  $(T_{n_{h}}g)_i(s)=0$.\\ 

By  \eqref{ec 3.3-14}, \eqref{ec 3.3-15} and \eqref{ec 3.3-16} we have: 
\begin{align*}
    &\|T_{n_{h}}g-f_2\|_{p,\rho}^p=\int_0^1\sum_{l\in  J_1} |(f_1)_l(s)|^p\rho_{K_{n_{h}}(l)}(s)ds\leq \int_0^1\sum_{l\in  J_1} |(f_1)_l(s)|^p\frac{C_2}{C_1}\rho_{K_{n_{h}}(l)}(s_{l, n_h})ds\\
    &\leq \frac{C_2}{C_1}2\delta_1 \int_0^1\sum_{l\in  J_1} |(f_1)_l(s)|^pds<\frac{C_2}{C_1^2}2\delta_1\int_0^1\sum_{l\in  J_1} |(f_1)_l(s)|^p \rho_l(s)ds = \frac{C_2}{C_1^2}2\delta_1||f_1||^p_{p,\rho} . 
\end{align*}
Moreover, 
\begin{align*}
    &\|f_1-g\|_{p,\rho}^p=\sum_{i\in I }\int_{0}^1|(f_1)_i(s)-g_i(s)|^p\rho_i(s)ds\\
    &=\sum_{l\in I_2}\sum_{j\in M_{n_{h}}(l)}\int_{0}^1|(f_2)_l(s)\nu_j^l|^p\rho_j(s)ds\\
    &\hspace{8em}+\sum_{l\in J_2\setminus J_1}\sum_{j\in M_{n_{h}}(K_{n_{h}}(l))}\int_0^1|(f_1)_l(s)u_j^l|^p\rho_j(s)ds\\
    &=\sum_{l\in I_2}\int_{0}^{1}|(f_2)_l(s)|^p\sum_{j\in M_{n_{h}}(l)}|\nu_{j}^l|^p\rho_j(s)ds+\\
    &\hspace{8em}\sum_{l\in  J_2\setminus J_1}\int_0^1|(f_1)_l(s)|^p\sum_{j\in M_{n_{h}}(K_{n_h}(l))}|u_j^l|^p\rho_j(s)ds\\
    &\leq \frac{C_2}{C_1}\sum_{l\in I_2}\int_{0}^1|(f_2)_l(s)|^p\left(\sum_{j\in M_{n_{h}}(l)}|\nu_{j}^l|^p\rho_j(t_{l, n_h})\right)ds+\\
    &\hspace{8em}\frac{C_2}{C_1}\sum_{l\in J_2\setminus J_1}\int_{0}^1|(f_1)_l(s)|^p\left(\sum_{j\in M_{n_{h}}(K_{n_{h}}(l))}|u_j^l|^p\rho_j(s_{l,n_h})\right)ds\\
    &\leq \frac{C_2}{C_1^2}\delta_2^{p-1}\sum_{l\in I_2}\int_{0}^1|(f_2)_l(s)|^p\rho_l(s)ds+
     \frac{C_2}{C_1^2}(2\delta_1)^{p-1}\sum_{l\in J_2\setminus J_1}\int_{0}^1|(f_1)_l(s)|^p\rho_l(s)ds\leq\\
    &\leq \frac{C_2}{C_1^2}\delta_2^{p-1} ||f_2||_{p,\rho}^p+
    (2\delta_1)^{p-1}\frac{C_2}{C_1^2}\|f_1\|^p_{p, \rho}.
\end{align*}
Now, choosing 
$$\delta_1=\left(\frac{\varepsilon^pC_1^2}{4C_2||f_1||_{p,\rho}^p}\right)^{1/(p-1)} \hspace{1em}\text{ and }\hspace{1em} \delta_2= \left(\frac{\varepsilon ^pC_1^2}{2C_2||f_2||^p_{p,\rho}}\right)^{1/(p-1)}$$
we get the assertion.

 Following a similar argument, it can be proved that if condition $(iii)$ holds, then given $\epsilon>0$ and $f_1, f_2, f_3, f_4\in F^p_\rho$ there exists some $k_0\in\mathds{N}$ and $g_1, g_2\in L^p_\rho(L(G))$ such that $$\|f_1-g_1\|_{p,\rho}<\epsilon,\hspace{2em} \|f_2-g_2\|_{p,\rho}<\epsilon$$
and also
$$\|T_{n_{k_0}}g_1-f_3\|_{p,\rho}<\epsilon,\hspace{2em}\|T_{n_{k_0}}g_2-f_4\|_{p,\rho}<\epsilon,$$
hence $\{T_t\}_{t\geq 0}$ is weakly mixing. 
 \end{proof}

 \begin{remark}
    As in the rooted case, one can show that either on $L_{\rho}^1(L(G))$ or on $L_{\rho}^p(L(G))$, the translation semigroup is mixing if and only if condition $(iii)$ of Theorem \ref{TH3} holds for the full sequence $(n)_{n\in\mathds{N}}$.
\end{remark}

 \begin{remark} In the case considered in Remark \ref{Ltr}, the conditions in Theorems \ref{weakly mixing translation} and \ref{TH3}  are exactly those ensuring that the left translation semigroup is hypercyclic when acting on $[0,+\infty[$ or on $\R$ (see e.g. \cite[Example 7.10]{Alfred} and \cite{desch_schappacher_webb1997hypercyclic}).
\end{remark}

\end{document}